\DeclareMathOperator{\disc}{Disc}
\DeclareMathOperator{\res}{Res}
\DeclareMathOperator{\Char}{Char}
\DeclareMathOperator{\GL}{GL}
\DeclareMathOperator{\Div}{Div}
\DeclareMathOperator{\inv}{inv}
\DeclareMathOperator{\supp}{supp}
\newcommand{\vF}{\mathbb{F}}
\newcommand{\vN}{\mathbb{N}}
\newcommand{\vZ}{\mathbb{Z}}
\newcommand{\vQ}{\mathbb{Q}}
\newcommand{\vH}{\mathbb{H}}
\newcommand{\vD}{\mathbb{D}}
\newcommand{\vP}{\mathbb{P}}
\newcommand{\cO}{\mathcal{O}}
\newcommand{\cS}{\mathcal{S}}
\newcommand{\cD}{\mathcal{D}}
\newcommand{\cL}{\mathcal{L}}
\newcommand{\RN}[1]{%
  \textup{\uppercase\expandafter{\romannumeral#1}}%
}
\newtheorem{theorem}{Theorem}[section]
\newtheorem{lemma}[theorem]{Lemma}
\newtheorem{proposition}[theorem]{Proposition}
\newtheorem{corollary}[theorem]{Corollary}
\theoremstyle{definition}
\newtheorem{problem}[theorem]{Problem}
\newtheorem{definition}[theorem]{Definition}
\theoremstyle{remark}
\newtheorem{remark}[theorem]{Remark}
\numberwithin{equation}{section}
\begin{document}

\title[A Local to global principle for densities]
{A local to global principle for densities over function fields}

\author[Giacomo Micheli]{Giacomo Micheli}
\address{Mathematical Institute\\
University of Oxford\\
Andrew Wiles Building\\ 
Woodstock~Road \\Oxford, UK
}
\email{giacomo.micheli@ox.ac.uk}

\subjclass[2010]{}

\keywords{Function fields; density; local to global principles; totally ramified places; rectangular unimodular matrices}

\begin{abstract}
Let $d$ be a positive integer and $\vH$ be an integrally closed subring of a global function field $F$.
The purpose of this paper is to provide a general sieve method to compute densities of subsets of $\vH^d$ defined by  local conditions. The main advantage of the method relies on the fact that one can use results from  measure theory to extract density results over $\vH^{d}$.
Using this method we are able to compute the density of the set of polynomials with coefficients in $\vH$ which give rise to ``good'' totally ramified extensions of the global function field $F$. As another application, we give a closed expression for the density of rectangular unimodular matrices with coefficients in $\vH$ in terms of the $L$-polynomial of the function field.
\end{abstract}

\maketitle

\section{Introduction}
\label{sec:introduction}
In \cite[Lemma 20]{bib:poonenAnn} B. Poonen and M. Stoll formalise a nice sieve method for computing densities using $p$-adic analysis. Essentially, the method consists of writing a given set $U\subseteq \vZ^d$ in terms of local conditions at the completions of $\vQ$;
once this is done, the density of $U$ can be computed by determining the measures of certain sets $U_p\subseteq \vZ_p$ which are associated to the local conditions which define $U$.
It is worth mentioning that the result is a powerful evolution of Ekedhal's Sieve (See \cite{torsten1991infinite}).

In this paper we present the extension of this method to global function fields i.e. univariate function fields over finite fields. Let $\vH$ be a non-trivial integrally closed subring of a function field $F$ and $\cS$ be the set of places of $F$ where all the functions in $\vH$ are well defined. It is well known (see for example \cite[Theorem 3.2.6]{bib:stichtenoth2009algebraic}) that $\vH$ consists exactly of the intersection of all the valuation rings $\cO_P$ of $F$ for $P\in\cS$. Vice versa, it also holds that an arbitrary intersection of valuation rings of $F$ is an integrally closed subring \cite[Proposition 3.2.5]{bib:stichtenoth2009algebraic}).
We will be interested in computing the density of a subset $U$ of $\vH^d$.

Before doing so, we first need to specify what  we mean by ``density'' of $U$ in the function field context. Over the set of rational integers $\vZ$, the density of a subset $U\subseteq \vZ^d$ is computed by considering the sequence of ratios between the number of points of $U$ falling in the hypercube of side $2B$ and centred at the origin, and $(2B)^d$. If $\{a_B\}_{B\in \vN}$ is this sequence of ratios and $u$ is its limit (if exists), then we say that $U$ has density $u$.
In the case of $\vH^d$, we explain how to use Moore-Smith convergence 
\cite[Chapter 2]{bib:kelley1955general} to define a notion of limit over the directed set of positive divisors having support in the complement of  $\cS$ (see also \citep{bib:BS}). Once this is understood, Riemann-Roch spaces of positive divisors having support in the complement of $\cS$ will play the role of intervals, and therefore products of such spaces will play the role of hypercubes.
 
Let $D_\cS$ be the set of positive divisors having support in the complement of $\cS$. The striking analogy between $\vZ$ and $\vH$ which allows our density definition (see Subsection \ref{preliminarydefandnot}) is  given by
\[\vZ=\bigcup_{B\in \vN}[-B,B[\,\cap\, \vZ \quad \text{and} \quad \vH=\bigcup_{D\in \mathcal D_\cS} \cL(D).\] 
In particular the reader should notice that the definition of density we will provide is consistent with the one used in the literature in the case of $\vF_q[x]$: if $F=\vF_q(x)$, $\vH=\vF_q[x]$ and $P_\infty$ is the place at plus infinity with respect to $x$, we have that $D_\cS=\{n P_\infty\}_{n\in \vN}$ and therefore 
 \[\cL(D)=\cL(nP_\infty)=\{f\in \vF_q[x]: \; \deg(f)\leq n\},\]
which induces the natural definition of density in the context of $\vF_q[x]$.

The essence of the presented method (Theorem \ref{thm:main_density_function_field}) is to polarize the difficulty of the problem: in fact, on one hand the $p$-adic formalism allows to easily compute a ``candidate'' for the density of a certain subset of $\vH^d$ by using tools from measure theory, on the other hand all the difficulty of the problem is unloaded on proving that the limit of a certain sequence (given by Equation \eqref{fund_cond_loc_to_glob}) tends to zero. 
In particular, we show that whenever the local conditions are actually related in a certain way to polynomial equations, the limit can be proven to be always zero (Theorem \ref{condition_verified_polynomials_THEOREM}). 

The entire machinery we build in Section \ref{sec:loctoglobprinciplesection}  is then used to produce two new results in Sections \ref{sec:probabilityoftotram} and \ref{sec:unimodularmatrices}. 

In Section \ref{sec:probabilityoftotram} we compute the
 probability that a ``random'' polynomial $f$ of fixed degree with coefficient in an given integrally closed subring $\vH\subset F$ gives rise to a totally ramified extension $E=F[y]/(f(y))$ of $F$ for which the equation $f(y)=0$ is ``good enough'' around the totally ramified place (in terms of Definition
\ref{def:goodtotram}).

Let $k,m$ be positive integers such that $k<m$ and $R$ be a domain. The question whether a homomorphism of $R^k$ in $R^m$ can be extended to an automorphism of $R^m$ raised many interesting questions in the past (see for instance Serre's Conjecture, which is proven in \cite{suslin1974projective,bib:projmod}).
In Section \ref{sec:unimodularmatrices} we close the problem of computing the density of homomorphisms of $\vH^k$ in $\vH^m$ which can be extended to automorphims of $\vH^m$. In the case of $\vH=\mathbb F_q[x]$, these homomorphisms arise from context of convolutional codes (see for example \cite{FORNASINI2004119} or \cite{rosenthal2001connections}) and their density was studied in \cite{bib:guo2013probability} and \cite{micheli2016density}.
In Theorem \ref{unimatrixdensity} we show that the density of unimodular matrices over $\vH$ is a rational number and can be explicitly computed as soon as the complement of the holomorphy set $\cS$ is finite.

\subsection{Preliminary definitions and notations}\label{preliminarydefandnot}
Let $\vF_q$ be a finite field.
In this paper all the function fields are global and have full constant field $\vF_q$. We denote by $\cO_P$ a valuation ring of function field $F$, having maximal ideal $P$. The set of all the places of $F$ will be denoted by $\vP_F$.
If $\cS$ is a proper subset of $\vP_F$, we denote by $\cS_t$ the subset of places of $\cS$ of degree greater than $t$. 
Moreover, we write $\vH_\cS$ to denote the \emph{holomorphy ring of $\cS$} i.e. the intersection of all the valuation rings associated to the places of $\cS$:
\[\vH_\cS=\bigcap_{P\in \cS} \cO_P.\] 
Sometimes, we will refer to $\cS$ as the \emph{holomorphy set} of $\vH_\cS$ and to $\vH_\cS$ as the \emph{holomorphy ring} of $\cS$. Holomorphy rings are integrally closed in $F$ and any integrally closed subring of $F$ is an holomorphy ring \cite[Proposition 3.2.5, Theorem 3.2.6]{bib:stichtenoth2009algebraic}.
In the whole paper we consider only holomorphy rings whose holomorphy set has finite complement in the set of all places of $F$. The most immediate example of holomorphy ring is $\vF_q[x]$ as this consists of the intersection of all the valuation rings of $\vF_q(x)$ different from the valuation ring at infinity.

Let $\Div(F)$ be the set of divisors of $F$ i.e. the free abelian group having as base symbols the elements in the set $\vP_F$. 
For $D=\sum_{P\in \vP_F} n_P P\in \Div(F)$, we denote by $\supp(D)$ the finite subset of $\vP_F$ for which $n_P$ is non-zero. Moreover, we will write $D\geq 0$ whenever $n_P\geq 0$ for any $P$ in $\vP_F$. Let 
\[\Div^+(F)=\{D\in \Div(F)\;|\; D\geq 0\}\]
Let $\cD_\cS$ be the subset of divisors of $\Div^+(F)$ having support over the complement of $\cS$ in $\vP_F$.
As $\cD_\cS$ is a directed set, we can define via Moore-Smith Convergence (see \cite[Chapter 2]{bib:kelley1955general} and more specifically for this context \citep{bib:HolMS}) a notion of limit over $\cD_\cS$. In this context, we can give an upper density definition for a subset $A$ of $\vH_\cS^d$ as follows:
\[\overline \vD_{\cS}(A):=\limsup_{D\in\cD_{\cS}}\frac{|A\cap \cL(D)^d|}{q^{\ell(D)d}}\]
where $\cL(D)$ is the Riemann-Roch space attached to the divisor $D$ and $\ell(D)=\dim_{\vF_q}(\cL(D))$.
Analogously, one can give a notion of lower density $\underline \vD_\cS$ by considering the inferior limit of the sequence. Whenever these two quantities are equal, we say that a subset $A$ of $\vH_\cS^d$ has a well-defined density $\overline \vD_{\cS}(A)=\underline \vD_{\cS}(A)=\vD_{\cS}(A)$. 

For a valuation ring $\cO_P$ let us denote by $\widehat \cO_P$ the completion of $\cO_P$ with respect to the $P$-adic metric. In addition, let us denote by $\mu_P$ the normalized Haar measure on $\widehat{\cO}_P$ with respect to the $P$-adic metric. 
For a subset $U\subseteq \widehat \cO_P$ we denote by $\partial U$ the boundary of $U$ with respect to the topology induced by the $P$-adic metric.
For a multivariate polynomial $f\in F[x_1,\dots x_n]$, we will denote by $\deg_{x_i}(f)$ (resp. $\deg_{\hom}(f)$) the degree of $f$ in the variable $x_i$ (resp. the degree of the homogenization of $f$). Whenever $f$ has all the coefficients in a given valuation ring $\cO_P$, we will denote by $\deg^P_{x_i}(f)$ (resp. $\deg^P_{\hom}(f)$) the degree of $f$ in the variable $x_i$ (resp. the degree of the homogenization of $f$) in $(\cO_P/P)[x_1,\dots,x_n]$.
For a positive integer $n$ and a given commutative domain $R$, we will denote by $\GL_n(R)$ the set of $n\times n$ matrices whose determinant is a unit of $R$.  

\section{The local to global principle for densities over global function fields}\label{sec:loctoglobprinciplesection}
In this section we describe the local to global principle which will be used later on.
This result is the function field analogue of \citep[Lemma 20]{bib:poonenAnn}.
\begin{theorem}\label{thm:main_density_function_field}
Let $d$ be a positive integer, $\cS$ be a subset of places of $F$ and $\vH_{\cS}$ the holomorphy ring of $\cS$. 
For any $P\in \cS$, let $U_P\subseteq \widehat \cO_P^d$ be a measurable set such that $\mu_P(\partial U_P)=0$.
Suppose that
\begin{equation}\label{fund_cond_loc_to_glob}
\lim_{t\rightarrow \infty }\overline{\vD}_{\cS}(\{a\in \vH_{\cS}^d\:|\: a\in U_P \;\text{for some }\; P\in \cS_t\})=0.
\end{equation}
Let $\pi:\vH_{\cS}^d\longrightarrow 2^{\cS}$ defined by
$\pi(a)=\{P\in \cS: a\in U_P\}\in 2^{\cS}$. Then
\begin{itemize}
\item[(i)] $\displaystyle{\sum_{P\in \cS} \mu_P(U_P)}$ is 
convergent.
\item[(ii)] Let $\Gamma\subseteq 2^{\cS}$. Then $\nu (\Gamma):=\vD_{\cS}(\pi^{-1}(\Gamma))$ exists and  $\nu$ defines a measure on $2^{\cS}$.
\item[(iii)] $\nu$ is concentrated at finite subsets of $\cS$. In addition, if $T\subseteq \cS$ is finite we have:
\[\nu(\{T\})=\left(\prod_{P\in T}\mu_P(U_P)\right)\prod_{P\in \cS\setminus T } (1-\mu_P(U_P)).\]
\end{itemize}
\end{theorem}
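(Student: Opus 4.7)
The plan is to follow the proof of \cite[Lemma 20]{bib:poonenAnn} with appropriate substitutions: the affine line $\vZ$ becomes $\vH_\cS$, intervals become Riemann-Roch spaces $\cL(D)$, and reduction modulo $p^n$ becomes reduction modulo $P^n$. The engine of the argument is a Riemann-Roch equidistribution: for $D\in \cD_\cS$, finitely many distinct places $P_1,\dots,P_m\in \cS$ and integers $n_i\geq 1$, the reduction map $\cL(D)\to \prod_i \widehat\cO_{P_i}/P_i^{n_i}$ has kernel $\cL(D-\sum_i n_i P_i)$ and is surjective as soon as $\deg D - \sum_i n_i\deg P_i \geq 2g-1$, so all its fibers have the same cardinality. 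Combined with the hypothesis $\mu_{P_i}(\partial V_{P_i})=0$, this yields, for any Borel $V_{P_i}\subseteq \widehat\cO_{P_i}^d$ with $\mu_{P_i}(\partial V_{P_i})=0$,
\[
\vD_\cS\!\left(\bigcap_{i=1}^{m}\{a\in\vH_\cS^d : a\in V_{P_i}\}\right) = \prod_{i=1}^{m}\mu_{P_i}(V_{P_i}),
\]
by sandwiching each $V_{P_i}$ between finite unions of residue classes modulo $P_i^n$ (handled exactly by Riemann-Roch) and letting $n\to\infty$.

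For (i), I apply this with $V_P = U_P$ for $P$ in any finite $F\subseteq \cS_t$. Inclusion-exclusion then gives
\[
\vD_\cS\!\left(\bigcup_{P\in F}\{a : a\in U_P\}\right) = 1 - \prod_{P\in F}(1-\mu_P(U_P)),
\]
and the left-hand side is bounded above by some $\epsilon(t)\to 0$ uniformly in $F\subseteq \cS_t$ by hypothesis \eqref{fund_cond_loc_to_glob}. Hence $\prod_{P\in \cS_t}(1-\mu_P(U_P))\to 1$ as $t\to\infty$, which is equivalent to $\sum_{P\in\cS}\mu_P(U_P)<\infty$.

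For (ii) and (iii), I fix a finite $T\subset \cS$ and choose $t$ large enough that $T\subseteq \cS\setminus \cS_t$. The truncated set
\[
A_T^t := \bigcap_{P\in T}\{a\in U_P\}\cap \bigcap_{P\in (\cS\setminus \cS_t)\setminus T}\{a\notin U_P\}
\]
has exact density $\prod_{P\in T}\mu_P(U_P)\cdot \prod_{P\in (\cS\setminus \cS_t)\setminus T}(1-\mu_P(U_P))$ by the equidistribution statement. Its symmetric difference with $\pi^{-1}(\{T\})$ is contained in $\{a : a\in U_P\text{ for some }P\in\cS_t\}$, whose upper density vanishes with $t$. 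Letting $t\to\infty$ and using (i) to secure convergence of the tail product yields the formula of (iii). Extending $\nu$ from singletons $\{T\}$, $T$ finite, to all $\Gamma\subseteq 2^\cS$ is then standard: under $\sum \mu_P(U_P)<\infty$ the identity $\sum_{T\text{ finite}}\nu(\{T\}) = 1$ holds by a routine factorization, and countable additivity on the countable support gives (ii).

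The main obstacle is the Riemann-Roch equidistribution itself, together with the cylinder-set approximation: one must verify that the counting measure on $\cL(D)^d$, as $D$ runs in the directed set $\cD_\cS$, converges weakly to the product Haar measure on $\prod_{P}\widehat\cO_P^d$ restricted to the finitely many places at play. The boundary-measure-zero hypothesis is indispensable, and uniformity must be tracked as the number of places grows. Once this step is in place, the inclusion-exclusion argument, the Borel-Cantelli-type tail bound, and the measure-theoretic verification that $\nu$ is a well-defined measure on $2^\cS$ concentrated on finite subsets are all routine.
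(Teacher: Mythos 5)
Your proposal is correct and follows essentially the same route as the paper: the Riemann--Roch computation of densities of residue classes modulo $\prod_i P_i^{n_i}$ (the paper's $P$-boxes and cubes via the Chinese Remainder Theorem), inner and outer approximation of each $U_P$ by finite unions of such classes using $\mu_P(\partial U_P)=0$, and the truncation $A_T^t$, which is exactly the paper's set $W_t$, with the tail controlled by condition \eqref{fund_cond_loc_to_glob}. Your derivation of (i) by inclusion–exclusion and of (ii) by summing over the countable support is the same argument the paper delegates to \cite[Lemma 20]{bib:poonenAnn}, so no further comparison is needed.
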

\begin{proof}
Throughout the proof $\cS$ will be fixed, so we will denote $\vH_\cS$ by $\vH$.
What we need to do is to translate the proof of \cite{bib:loctoglob} to the context of function fields.  
Essentially, we need to understand how the measure of $P$-adic intervals can be translated into density via the use of Riemann-Roch Theorem 
\cite[Theorem 1.5.15]{bib:stichtenoth2009algebraic}. Once this is done, the same arguments of the proof of \citep[Lemma 20]{bib:poonenAnn} will apply to this context.
We define a \emph{$P$-interval} in $\widehat \cO_P$ as the set $\{x\in \widehat \cO_P: x\equiv a \mod P^{e_P}\}$ for some $e_P\in \vN$ and $a\in \widehat \cO_P$. 
A $P$-\emph{box} $I_P$ will just be a product of $P$-intervals:
\[I_P=\{x\in \widehat \cO_P^d: x_j\equiv a_j \mod P^{e_{P,j}} \; \text{for $j\in \{1,\dots d\}$} \}.\]
To simplify notation, we say that a $P$-box is a $P$-\emph{cube} if it has the form 
\[C_P=\{x\in \widehat \cO_P^d: x_i\equiv y_i \mod P^{e_P}, \; i\in \{1,\dots,d\}\}\] for some $e_P\in \vN$ and $(y_1,\dots,y_d)\in \widehat \cO_P^d$. In other words, $C_P$ is the cartesian product of intervals of equal length. We say that $c_P=(y_1,\dots,y_d)$ is the \emph{center} of the cube. 
Let $A$ be a finite subset of $\cS$.
Let us now compute the density of the elements in $\vH^d$ which are mapped in a product of a finite number of $P$-boxes via the natural embedding
$\vH^d\longrightarrow \prod_{P\in A} \widehat \cO_P^d$. 
Let $I=\prod_{P\in A}I_P$ be such product of $P$-boxes. For any $P$, the $P$-box $I_P$ can be covered with a finite number $l_P$ of disjoint cubes of equal size $e_P$, as all the congruences can be decomposed in terms of the finest congruence, given by $\max\{e_{P,j}:\; {j\in \{1,\dots d\}}\}=:e_P$.
Therefore, one can write
\[I=\prod_{P\in A}I_P=\prod_{P\in A} \bigsqcup^{l_P}_{i=1} C_P^{(i)}\]
with $\mu_P(C_P^{(i)})=q^{-d\deg(P)e_P}$ independently of $i$.

We consider the diagram
\[
\begin{CD}
\vH^d        @>\iota>>    \prod_{P\in A} \widehat \cO_P^d \supseteq I\\
@VV\pi_J V                            @VV\pi'_J V\\
(\vH/J)^d   @>{\psi}>>    \prod_{P\in A} (\widehat \cO_P/P^{e_P})^d=:R
\end{CD}
\]
where $J=\prod_{P\in A} P^{e_P}\subseteq \vH$, the map $\iota$ is the natural inclusion, and $\psi$ is the isomorphism (coming from the Chinese Remainder Theorem) which makes the diagram commutative.
On the right-hand side, we can immediately compute the product measure $m$ of $I$ by looking at
its definition, getting $m=\prod_{P\in A} l_P q^{-d\deg(P)e_P}$.
It remains to show that the density of $\iota^{-1}(I)$ is indeed $m$. 
For this, let us decompose $I$.
Let $\mathcal I_P$ be a finite set indexing the cubes which cover $I_P$ and let $\mathcal I =\prod_{P\in A} \mathcal I_P$. Any given $i=(i_P)_{P\in A}\in \mathcal I$ determines a choice of cubes as follows: for each place $P\in A$ we select exactly one cube $C^{(i_P)}_P$, having center $c_P^{(i_P)}$. We now build a $C_i$ as the product
$\prod_{P\in A} C_P^{(i_P)}$. Clearly, the set of  $C_i$'s built in this way has cardinality $\prod_{P\in A} l_P$ and covers $I$ via a disjoint union. If we can now prove that the density of $\iota^{-1}(C_i)$ is independent of the choice of $i\in\mathcal I$, then we will have that 
\begin{equation}\label{eq:cubesintervals}
\vD(\iota^{-1}(I))=\vD(\iota^{-1}(C_i))\cdot \prod_{P\in A} l_P.
\end{equation}
To achieve this, we now explicitly compute the value $\vD(\iota^{-1}(C_i))$.
As the diagram above  is commutative, we can equivalently compute the density of elements of $\vH^d$ falling into $\psi^{-1}\pi_J'(C_i)$ via the map $\pi_J$. Let $z_{C_i}\in \pi_J^{-1}\psi^{-1} \pi_J' ((c_P^{(i)})_{P\in A})$. 
Notice that we have
\[\iota^{-1}(C_{i})=\pi_J^{-1}\psi^{-1}\pi_J'(C_{i})=z_{C_i}+J\vH. \]
Observe that for any divisor $D\in \cD_\cS$, the map $\pi_J$ restricted to $\cL(D)$ is $\vF_q$-linear. 
Let $g$ be the genus of $F$.
Therefore if we denote by $z_{C_i,j}$ the $j$-th component of $z_{C_i}$, we have
\[|\cL(D)\cap (z_{C_i,j}+J\vH)|=|\cL(D)\cap J\vH|=\left|\cL\left(D-\sum_{P\in A} e_P P\right)\right|=q^{\ell(D-\sum_{P\in A} e_P P)},\] 
which for $D$ of large degree, equals $q^{\deg(D-\sum_{P\in A} e_P P)+1-g}$ by Riemann-Roch Theorem. We can finally compute the density of elements mapping in the cube $C_i$ (which is in fact independent of $i$, as we wanted):
\[\begin{split}
\vD (\iota^{-1}(C_i)) & =\lim_{D\in \cD_\cS} \frac{|\cL(D)^d\cap  \pi_J^{-1}\psi^{-1}\pi_J'(C_i)|}{q^{\ell(D)d}}\\
 & =q^{-\ell(D)d}\prod^d_{j=1}|\cL(D)\cap (z_{C_i,j}+J\vH)|=q^{-(\sum_{P\in \cS_t}e_P\deg(P))d}.
\end{split}\]
Using now Equation \eqref{eq:cubesintervals} we get the final claim by comparing $m$ with $\vD(i^{-1}(I))$. 
Since now we have proved the theorem for boxes, all the arguments of the proof of \cite[Lemma 20]{bib:poonenAnn} are now straightforward to apply. In fact, suppose for a moment that the set of $P$'s in $\cS$ for which $U_P$ is different from the empty set is a finite set $A$. Now, let $T$ be a finite set of places. Assuming that $\mu_P(\partial U_P)=0$,  one can cover each of the $U_P$'s from the interior (resp. $U_P^{c}$) with a finite set of boxes which well approximate the measure $\mu_P(U_P)$ (resp. $\mu_P(U_P^{c})$). In particular one has
\[\nu(\{T\}) \geq \left(\prod_{P\in T}\mu_P(U_P')\right)\prod_{P\in \cS\setminus T } (1-\mu_P(U_P')),\]
where the products above are both finite and  $U_P'$ union of the boxes for each $P$, where the theorem holds. As we can apply the symmetric argument with a set of external approximations $U_P''$ we have
\[\nu(\{T\}) \leq \left(\prod_{P\in T}\mu_P(U_P'')\right)\prod_{P\in \cS\setminus T } (1-\mu_P(U_P'')),\]
from which the claim follows by letting the approximation get sharper and then $\mu_P(U_P'')$ and $\mu_P(U_P')$ tend to $\mu(U_P)$.

On the other hand, if $A$ is an infinite set, one easily sees that an approximation with finitely many $U_P$ is good enough, as long as condition \eqref{fund_cond_loc_to_glob} is verified. To see this, let $T$ be a finite subset of $\cS$ and let us recall that $\cS_t$ is the set of places of $\cS$ of degree larger than $t$ and then $\cS_t^{c}$ is the subset of $\cS$ consisting of places of degree less than or equal to $t$. Observe that
for a positive integer $t$ such that $\cS_t^c$ contains $T$ we can define a partial approximation of $\pi^{-1}(\{T\})$
\[W_t=\{a\in \vH_{\cS}^d\mid a \in U_P \;\forall P\in T,\; a\notin U_P\; \forall P\in T^c \cap \cS_t^c \}. \]
Notice that $W_t$ contains $\pi^{-1}(\{T\})$ so $\vD(\pi^{-1}(\{T\}))\leq \vD(W_t)$. In addition we have that
\[\vD(\pi^{-1}(\{T\}))\geq \vD(W_t)- 
\overline \vD (W_t\setminus \pi^{-1}(\{T\})).\]
Now, by letting $t$ go to infinity and using condition \eqref{fund_cond_loc_to_glob} on
$\vD (W_t\setminus \pi^{-1}(\{T\}))$ one gets the claim.
\end{proof}
The next Theorem ensures that when the $U_P$ can be expressed in terms of polynomial equations, Condition \eqref{fund_cond_loc_to_glob} is always verified, similarly to what happens in the case of Ekedhal Sieve for integers \cite{torsten1991infinite}.
\begin{theorem}\label{condition_verified_polynomials_THEOREM}
Let $F/\vF_q$ be a global function field and $\cS$ be a subset of $\vP_F$ with finite complement. Let $\vH_{\cS}$ be the holomorphy ring of $\cS$. Let $f,g \in \vH_{\cS}[x_1,\dots,x_d]$ be coprime polynomials. Then
\begin{equation}\label{condition_verified_polynomials}
\lim_{t\rightarrow \infty} \overline{\vD}_{\cS}\left(\{y\in \vH_{\cS}^d: \quad f(y)\equiv g(y)\equiv 0 \mod P\quad  \text{for some}\; P\in \cS_t\}\right)=0.
\end{equation}
\end{theorem}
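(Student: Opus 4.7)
The plan is to prove that $\overline{\vD}_{\cS}(N_P)\leq C/q^{2\deg P}$ for all but finitely many $P\in\cS$, where $N_P:=\{y\in\vH_{\cS}^d : f(y)\equiv g(y)\equiv 0\pmod P\}$ and $C$ depends only on $f,g$, and then to sum over $P\in\cS_t$ using the growth rate $\#\{P\in\vP_F:\deg P=n\}=O(q^n/n)$ coming from the Weil bound on the zeta function of $F$. Since $f,g$ are coprime in $\vH_{\cS}[x_1,\dots,x_d]$ they remain coprime in $F[x_1,\dots,x_d]$, and the closed subscheme of $\mathbb{A}^d_F$ they cut out therefore has all irreducible components of dimension at most $d-2$. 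Viewing $X:=\mathrm{Spec}(\vH_{\cS}[x_1,\dots,x_d]/(f,g))$ as a scheme over $\mathrm{Spec}(\vH_{\cS})$, Chevalley's upper semicontinuity of fiber dimension combined with $\dim\mathrm{Spec}(\vH_{\cS})=1$ implies that the set $E\subseteq\cS$ of places where the closed fiber $X_P$ has dimension exceeding $d-2$ is finite. For every $P\in\cS\setminus E$, a Schwartz--Zippel / Lang--Weil point count for subvarieties of $\mathbb{A}^d$ of dimension $\leq d-2$ and degree bounded in terms of $\deg f,\deg g$ yields $|X_P(\vF_{q^{\deg P}})|\leq C\,q^{(d-2)\deg P}$ with $C=C(f,g)$.

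The Riemann--Roch computation underlying the proof of Theorem~\ref{thm:main_density_function_field} then shows that $N_P$ is a disjoint union of $P$-cubes, one for each element of $X_P(\vF_{q^{\deg P}})$, each of density $q^{-d\deg P}$; hence $\overline{\vD}_{\cS}(N_P)\leq C/q^{2\deg P}$ for $P\in\cS\setminus E$. Choosing $t>\max\{\deg P:P\in E\}$ so that $\cS_t\cap E=\emptyset$, and writing $M_t:=\bigcup_{P\in\cS_t}N_P$, the pointwise inequality $|M_t\cap\cL(D)^d|\leq\sum_{P\in\cS_t}|N_P\cap\cL(D)^d|$ combined with the bounds on each $|N_P\cap\cL(D)^d|$ gives
\[
\overline{\vD}_{\cS}(M_t)\;\leq\;\sum_{P\in\cS_t}\overline{\vD}_{\cS}(N_P)\;\leq\;\sum_{n>t}\frac{C'}{n\,q^n}\;\xrightarrow[t\to\infty]{}\;0,
\]
for a constant $C'$ depending on $f,g$ and the genus of $F$, which is the desired conclusion.

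The principal obstacle is the passage from the pointwise inequality to the density inequality above, since upper density is only finitely subadditive in general. This is handled by observing that for fixed $D$, only places $P$ with $\deg P=O(\deg D)$ can contribute nontrivially to $M_t\cap\cL(D)^d$: for $\deg P$ sufficiently large relative to $\deg D$, the condition $v_P(f(y))\geq 1$ on $y\in\cL(D)^d$ forces $f(y)=0$ (the zero divisor of $f(y)$ has degree at most $\deg_{\hom}(f)\cdot\deg D+O(1)$), which restricts $y$ to a lower-dimensional subscheme of negligible density. For the remaining places, finite subadditivity applies and Riemann--Roch in the regime $\deg D\gg\deg P$ validates the density estimates; letting $D$ and then $t$ tend to infinity in the proper order completes the argument.
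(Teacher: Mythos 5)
Your regime of small places reproduces the paper's estimate for $t<\deg P\leq \deg D$ (Lang--Weil bound $N_P\leq Cq^{(d-2)\deg P}$ plus Riemann--Roch, summed against the $O(q^n/n)$ places of degree $n$), and your forcing observation is correct as far as it goes: if $\deg P$ exceeds roughly $\deg_{\hom}(f)\deg D$, then $f(y)\equiv 0\bmod P$ with $y\in\cL(D)^d$ forces $f(y)=0$, and such $y$ form a set of density $O(q^{-\ell(D)})$. The genuine gap is the intermediate window $\deg D<\deg P\leq \deg_{\hom}(f)\deg D+O(1)$, which is nonempty as soon as $\deg_{\hom}(f)\geq 2$ and is covered by neither of your two regimes. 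There the forcing argument fails, because a nonzero $f(y)$ has zero divisor of degree up to $\deg_{\hom}(f)\deg D+O(1)$ and can perfectly well be divisible by a place of degree larger than $\deg D$. And the per-place bound is useless: since $\deg P>\deg D$ one has $\cL(D-P)=\{0\}$, so the best pointwise estimate is $|N_P\cap\cL(D)^d|\leq\min\{N_P,\,q^{d\ell(D)}\}$ with $N_P\approx Cq^{(d-2)\deg P}$, and summing this over the roughly $q^n/n$ places of each degree $n$ in the window produces a term of order $q^{(d-1)\deg_{\hom}(f)\deg D-d\deg D}$, which diverges already for $d\geq 3$, $\deg_{\hom}(f)\geq 2$. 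So the sentence ``for the remaining places, finite subadditivity applies and Riemann--Roch in the regime $\deg D\gg\deg P$ validates the density estimates'' is not justified: the remaining places are not all in that regime, and no per-place bound plus subadditivity can work there, since the same $y$ is being counted against exponentially many places.

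What is missing is precisely the content of the paper's estimate (\RN{2}), which treats \emph{all} places of degree $>\deg D$ simultaneously rather than one at a time, by eliminating a variable. Taking a generator $h\in\vH_{\cS}[x_2,\dots,x_d]$ of the elimination ideal $(f,g)\cap F[x_2,\dots,x_d]$ (principal because $(f,g)$ has codimension $2$), any $y=(y_1,r)\in\cL(D)^d$ lying in some $N_P$ with $\deg P>\deg D$ satisfies $h(r)\equiv 0\bmod P$. Either $h(r)=0$, which confines $r$ to at most $\deg_{x_2}(h)\,q^{(d-2)\ell(D)}$ tails, or $h(r)\neq 0$, in which case the number of places of degree $>\deg D$ dividing $h(r)$ is bounded by $\deg_{\hom}(h)$ (degree count on the zero divisor of $h(r)$), and for each such place at most $\deg_{x_1}(f)$ values of $y_1$ are possible because evaluation $\cL(D)\to\cO_P/P$ is injective. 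This bounds the whole contribution of the regime $\deg P>\deg D$ by $O(q^{(d-1)\ell(D)})$, i.e.\ density $O(q^{-\ell(D)})$. Your forcing trick recovers only the extreme tail of this regime; to close the gap you need this elimination (or an equivalent resultant) argument, at which point your proof becomes essentially the paper's.
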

\begin{proof}
If $d=1$  there is nothing to prove so we can suppose $d>1$. Without loss of generality, we can also suppose $\deg_{x_1}(f)>0$.
Since $\cS$ will be fixed throughout the proof, we will denote $\vH_{\cS}$ and $\vD_{\cS}$ by $\vH$ and $\vD$ respectively.
Let us recall that the places in $\cS$ are in natural correspondence with the prime ideals of $\vH$, therefore with a small abuse of terminology we will identify this two sets. 
We first fix $t$ large enough, so that $\deg^P_{x_1}(f)=\deg_{x_1}(f)$ for any $P$ of degree larger than $t$.
Now fix $D$ large enough so that $\deg(D)>t$.
Let us also introduce new notation to simplify the computations. For a divisor $D$, let us define
\[a_t(D):=\left| \{y \in \cL(D)^d \;:\; f(y)\equiv g(y)\equiv 0 \mod P\quad  \text{for some}\; P\in \cS_t \}\right|q^{-d\ell(D)},\]
\[c_P(D):=\left|\{y \in \cL(D)^d\;:\; f(y)\equiv g(y)\equiv 0 \mod P\}\right|q^{-d\ell(D)}.\]
Our first purpose is to estimate $a_t(D)$ for $t$ and $D$ large. 
First, we notice a simple upper bound for $a_t(D)$:
\[a_t(D)\leq \sum_{P:\deg(P)>t} c_P(D).\]
We now want to estimate the sum above for different regimes of $\deg(P)$ and $\deg(D)$. In order to do so,
let us  further  split the sum as 
\begin{equation}\label{splitted_equation}
\underbrace{\sum_{P\, : \, t<\deg(P)\leq \deg(D)} c_P(D)}_{(\RN{1})}+ \underbrace{\sum_{P\, : \, \deg(P)>\deg(D)} c_P(D)}_{(\RN{2})}.
\end{equation}

Let us estimate (\RN{1}). First, we want to give a reasonable estimate for $c_P(D)$ in the specified regime.
Notice that for each point of $z\in \vF^d_{q^{\deg(P)}}$ satisfying $f(z)\equiv g(z)\equiv 0 \mod P$ there are at most
$|\cL(D-P)^d|$ preimages of $z$ in $\cL(D)^d$, as the evaluation map $\cL(D)\rightarrow \cL(D)(P)\subseteq \vF_{q^{\deg(P)}}$ is linear and has kernel $\cL(D-P)$. Let $N_P$ be the number of $\vF_{q^{\deg(P)}}$-points of the variety defined by $f$ and $g$ when reduced modulo $P$. 
Let $g_F$ be the genus of $F$. 
By observing that $\ell(D)\geq \deg(D) + 1 - g_F$ and that $\ell(D-P)\leq \deg(D)-\deg(P)+1$ we get:

\begin{align*}
c_P(D)\leq & N_P |\cL(D-P)|^d q^{-\ell(D)d}\\
 & \leq N_P q^{d(\deg(D)-\deg(P)+1)} q^{-(\deg(D)+1-g_F)d}\\
 & = N_Pq^{(g_F-\deg(P))d}.
\end{align*}

As $t$ can be chosen large enough to avoid the places of bad reduction, we can estimate classically $N_P$ as $C q^{(d-2)\deg(P)}$ for some constant $C$. It follows that
\[\sum_{P:t<\deg(P)\leq \deg(D)} c_P(D)\leq \sum_{P:t<\deg(P)\leq \deg(D)}  \widetilde{C} q^{-2\deg(P)}\]
for some other constant $\widetilde C$.

Let us estimate (\RN{2}). Let $(f,g)$ be the ideal generated by 
$f,g$ in $F[x_1,\dots,x_d]$ and let $J=(f,g)\cap F[x_2,\dots,x_d]$. Since $(f,g)$ has codimension $2$, $J$ is principal. Let $h\in F[x_2,\dots x_d]$ be the generator of $J$, which can be chosen with coefficients over $\vH$ by multiplying by an appropriate element in $\vH$. 
Let us also assume without loss of generality that $\deg_{x_2}(h)>0$.
Let now $D$ be so large that modulo every prime $P$ of degree larger than $\deg(D)$, we have $\deg_{hom}^P(h)=\deg_{hom}(h)$.
Consider now all the  elements of $\cL(D)^d$ ending with a fixed $r=(r_2,\dots,r_d)\in \cL(D)^{d-1}$ and for which $h(r)\neq 0$.  Let us estimate their contribution to each $c_P(D)$ in the sum (\RN{2}). 
Let $I_r$ be the product of all the prime ideals $P$ of $\vH$ such that $\deg(P)>\deg(D)$ and for which there exists $x\in \vH$ such that $f(x,r_2,\dots,r_d)\equiv g(x,r_2,\dots,r_d)\equiv 0 \mod P$ (this set is finite as $h(r)\neq 0$). If we denote by $u_r$ the number of distinct primes appearing in the factorization of $I_r$, the contribution of all the $d$-tuples ending with $r$ is bounded by $u_r \deg_{x_1}^P(f)=u_r\deg_{x_1}(f)$.
By the definition of $h$, it is clear that $h \vH[x_1,\dots,x_d] \subseteq (f,g)\cap \vH[x_1,\dots,x_d]$. If we denote by $(f,g)_{I_r}$ the projection of $(f,g)\cap \vH[x_1,\dots,x_d]$ in $(\vH/I_r)[x_1,\dots,x_d]$, we have that 
\[h \vH/I_r[x_1,\dots,x_d] \subseteq (f,g)_{I_r}.\]
Therefore this  in turn implies that $(r_2,\dots r_d)$  satisfies $h(r_2,\dots, r_d)\equiv 0 \mod I_r$.
Now, the key observation to get the final estimate for (\RN{2}) is the following: 
$\deg(D)$ was chosen large in such a way that the homogeneous degree of $h$
is constant modulo $P$  for any $P$ of degree larger than $\deg(D)$. 
Recall now that every prime ideal $P$ appearing in the factorization of $I_r$ has degree larger than $D$, therefore
\[u_r\deg(D)< \deg(I_r)\leq \deg(D)\deg_{hom}(h)+C\]
where the constant $C$ depends on the leading coefficients of $h$ (and independent of $D$), from which it follows that $u_r<\deg_{hom}(h)$, for $D$ large enough.

The reader should now notice that in (\RN{2}), an element in $\cL(D)^d$ ending with $r$ (i.e. of the form $(r_1,r)$), cannot contribute more than $1$ for each $c_P(D)$, as the evaluation map $\cL(D)\rightarrow \cL(D)(P)$ is an injection to $\vF_{q^{\deg(P)}}$.
It follows easily that the set of all the $d$-tuples ending with $r$ contribute at most $u_r\deg_{x_1}(f)$ to the whole sum (\RN{2}).
Now, using the observations above and recalling that we also have to take into account the size of the set $T$ of the $(d-1)$-tuples $r=(r_2,\dots, r_d)$ such that $h(r)=0$ for $r\in \vH^{d-1}$, we finally get
\[\sum_{P:\deg(P)>\deg(D)} c_P(D)\leq \deg_{x_i}(h)q^{\ell(D)(d-1)}q^{-\ell(D)d}+ \sum_{r\in \cL(D)^{d-1}\setminus T} q^{-\ell(D)d}u_r\deg_{x_1}(f)\]
\[\leq q^{-\ell(D)} (\deg_{hom}(h)\deg_{x_1}(f)-1).\]

At this point we observe that the estimates of (\RN{1}) and (\RN{2}) only hold for large values of $t$ and $D$, so it is now important to notice how the order of the limits in \eqref{condition_verified_polynomials} is actually taken into account: in order for estimate (\RN{1}) to hold, it is enough to choose $t$ so large that the primes of bad reduction are avoided. For estimate (\RN{2}), take $D$ so large that 
\begin{itemize}
\item its degree is larger than $t$,
\item the homogeneous degree of $h$ is constant with respect to all places of $\cS$ of degree larger than $\deg(D)$,
\item the degree of $f$ with respect to $x_1$ is constant for all places of $\cS$ of degree larger than $\deg(D)$.
\end{itemize}
We can now safely use the two estimates to complete the proof:
\begin{align*}
\lim_{t\rightarrow \infty} \lim_{D\rightarrow \infty} a_t(D)\leq  \lim_{t\rightarrow \infty} \lim_{D\rightarrow \infty}\sum_{P:\deg(P)>t} c_P(D)=&\\
\lim_{t\rightarrow \infty} \lim_{D\rightarrow \infty} \sum_{P:\deg(P)> \deg(D)}c_P(D)+\sum_{P:t<\deg(P)\leq \deg(D)} c_P(D)\leq &\\
\lim_{t\rightarrow \infty} \lim_{D\rightarrow \infty} q^{-\ell(D)} (\deg_{hom}(h)\deg_{x_1}(f)-1)+ \sum_{P:t<\deg(P)\leq \deg(D)} \widetilde{C}q^{-2\deg(P)}=\\
\lim_{t\rightarrow \infty} \sum_{P:t<\deg(P)} \widetilde{C}q^{-2\deg(P)}.
\end{align*}
This completes the proof, since the sum above is the tail of a subseries of the zeta function of $F$ evaluated at $2$, which is converging.
\end{proof}

The reader should notice that the counting technique using in the estimate of (II) is similar to the one used in the case of $\vZ$ in the main result of \cite{torsten1991infinite}.

\section{On the probability of a totally ramified extension of global function fields}\label{sec:probabilityoftotram}

In this section we are interested in obtaining the ``probability'' that a random extension of a given function field is totally ramified in a good way at some place. 
If $F/\vF_q$ is a function field with full constant field $\vF_q$ and $E$ is a finite extension of $F$, we recall that an extension of places $Q|P$ is said to be \emph{totally ramified} if the dimension of  $\cO_Q/Q$ as an $\cO_P/P$ vector space is equal to $1$.

The notion of ``good'' total ramification is encoded in the following
\begin{definition}\label{def:goodtotram}
Let $E:F$ be a separable totally ramified extension of function fields. We say that $E:F$ is \emph{nicely totally ramified with respect to $f(T)\in F[T]$} if 
\begin{itemize}
\item $f(X)$ is the defining polynomial of $E$, i.e. $E\cong F[y]/(f(y))$.
\item there exists a totally ramified extension of places $Q\mid P$ of $E:F$ such that $\cO_P[y]=\cO_Q$.
\end{itemize}
\end{definition}
The reader with some insight in algebraic geometry can read the condition of nice total ramification as a condition of ``non-singularity'' around the totally ramified place. 
%
%

Our purpose is now to compute the density of degree $n$ polynomials $f(X)$ of $F[X]$ for which $f(X)$ is irreducible and the extension $F[y]/(f(y)):F$ is nicely totally ramified with respect to $f(X)$.

\subsection{A characterization of polynomials defining nice totally ramified extensions}

First of all, we convert the nice total ramification property into properties of the coefficients of the  defining polynomials. 
Let us start by stating some well known facts on totally ramified extensions.
\begin{lemma}\label{propcombinationstich}
Let $Q|P$ be an extension of places of the function field extension $E:F$. Let $u$ be a uniformizer for $Q$, $E=F(u)$ and $f_u$ be the minimal polynomial for $u$ over $\cO_P$. 
Then
\begin{itemize}
\item if $Q|P$ is totally ramified, we have that $\cO_P[u]=\cO_Q$;
\item $f_u$ is an Eisenstein polynomial if and only if $Q|P$ is totally ramified.
\end{itemize}
\end{lemma}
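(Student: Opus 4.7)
My plan is to derive both statements by elementary valuation bookkeeping applied to the identity $u^n+a_{n-1}u^{n-1}+\cdots+a_0=0$ coming from $f_u(u)=0$. Write $n:=\deg f_u=[E:F]$ and let $e$ be the ramification index of $Q\mid P$. The two facts I will use repeatedly are that $v_Q|_{F^\times}=e\cdot v_P$ takes values in $e\vZ$, and that by the standing hypothesis $v_Q(u)=1$.

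For the second bullet, in the forward direction I assume total ramification, so $e=n$. Then the valuations $v_Q(a_iu^i)=n\,v_P(a_i)+i$ for $0\le i<n$ have pairwise distinct residues modulo $n$, and the strict ultrametric inequality forces the unique minimum on the right of $u^n=-\sum_{i<n}a_iu^i$ to equal $v_Q(u^n)=n$. The only index achieving equality $n\,v_P(a_i)+i=n$ is $i=0$ (by the residue condition), which gives $v_P(a_0)=1$, while the inequality at the other indices yields $v_P(a_i)\ge 1$ for $i\ge 1$. This is precisely the Eisenstein condition. For the converse, I keep $v_Q(u)=1$ and impose the Eisenstein inequalities: then $v_Q(a_0)=e$, whereas $v_Q(a_iu^i)\ge e+i\ge e+1$ for every $i\ge 1$, so the minimum on the right is uniquely attained at $i=0$ with value $e$. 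This gives $n=v_Q(u^n)=e$, and together with $ef=n$ it forces $f=1$, so $Q\mid P$ is totally ramified.

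For the first bullet, the inclusion $\cO_P[u]\subseteq\cO_Q$ is immediate, so I take $\alpha\in\cO_Q$ and expand $\alpha=\sum_{i=0}^{n-1}c_iu^i$ with $c_i\in F$, using that $\{1,u,\dots,u^{n-1}\}$ is an $F$-basis of $E=F(u)$. Since $e=n$ in the totally ramified case, the valuations $v_Q(c_iu^i)=n\,v_P(c_i)+i$ are again pairwise distinct modulo $n$, so $v_Q(\alpha)=\min_i(n\,v_P(c_i)+i)$. If some $c_i$ had $v_P(c_i)\le -1$, its contribution would have valuation at most $-n+(n-1)=-1<0$, contradicting $\alpha\in\cO_Q$; hence every $c_i\in\cO_P$ and $\alpha\in\cO_P[u]$. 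The argument is essentially routine valuation bookkeeping, so there is no substantive obstacle; the only point requiring a little care is that the mod-$n$ distinctness used in the first bullet and the forward implication of the second bullet relies on $e=n$, whereas the converse implication of the second bullet must be handled by the slightly different observation that $v_Q(a_0)=e$ is strictly smaller than every $v_Q(a_iu^i)$ with $i\ge 1$.
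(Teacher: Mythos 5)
Your proof is correct, but it takes a different route from the paper: the paper disposes of this lemma in one line by citing \cite[Propositions 3.1.15 and 3.5.12]{bib:stichtenoth2009algebraic}, whereas you reprove those standard facts from scratch by valuation bookkeeping on $f_u(u)=0$ and on the expansion $\alpha=\sum_{i<n}c_iu^i$, using that for $e=n$ the values $n\,v_P(c_i)+i$ are pairwise incongruent mod $n$ so the ultrametric minimum is attained uniquely. This is exactly the classical argument behind the cited propositions, so your version buys self-containedness at the cost of length, while the paper's citation is shorter but opaque. Two small points to tidy up: in the converse of the second bullet, the identity you invoke is really the fundamental equality $\sum_{Q'\mid P}e(Q'\mid P)f(Q'\mid P)=[E:F]$, which gives $e f\le n$; combined with your conclusion $e=n$ this forces $f(Q\mid P)=1$ and also that $Q$ is the unique place above $P$, which is what ``totally ramified'' should mean here (your phrase ``$ef=n$'' presupposes uniqueness of $Q$, but the conclusion is unaffected). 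Relatedly, you tacitly use total ramification in the sense $e(Q\mid P)=[E:F]$, which is the sense of the Stichtenoth results the paper cites and the only sense in which the lemma is true; the looser ``residue degree one'' phrasing recalled in Section 3 of the paper would not support your (or any) proof of the forward implication, so your reading is the right one. Finally, the strict-minimum arguments should be read as ranging over the indices with nonzero coefficients, which is immediate.
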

\begin{proof}
To see this, simply combine \cite[Proposition 3.1.15]{bib:stichtenoth2009algebraic} and \cite[Proposition 3.5.12]{bib:stichtenoth2009algebraic}.
\end{proof}
\begin{proposition}\label{fund_thm_ramified}
Let $k$ be a perfect field and $f$ be an irreducible separable polynomial of degree $n\geq 2$ over a function field $F/k$. Let $E=F[y]/(f(y))$ and $Q|P$ be a totally ramified extension of places of $E:F$. 
Then we have
\begin{itemize}
\item[(i)] if $y$ is holomorphic at $Q$, then $\cO_P[y]=\cO_Q$ if and only if $f(T+u)$ is an Eisenstein polynomial for some $u$ in $\cO_P$.
\item[(ii)] if $y$ is not holomorphic at $Q$, then $\cO_P[1/y]=\cO_Q$ if and only if $T^nf(1/T)$ is Eisenstein.
\end{itemize}
\end{proposition}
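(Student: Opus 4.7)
The plan is to reduce each direction of both parts to Lemma~\ref{propcombinationstich}: in a totally ramified extension an element generates $\cO_Q$ over $\cO_P$ iff it is a uniformizer of $Q$, iff its minimal polynomial over $F$ is Eisenstein at $P$. A preliminary observation: total ramification $e(Q|P)=n=[E:F]$, together with the fundamental equality, forces $Q$ to be the unique place of $E$ above $P$, so $\cO_Q$ is a free $\cO_P$-module of rank $n$. Also $f$ may be taken monic with nonzero constant term $a_0$ (otherwise $T\mid f$, contradicting irreducibility with $n\geq 2$).

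For (i), the $(\Leftarrow)$ direction is immediate: if $h(T):=f(T+u)$ is Eisenstein, then $h$ is irreducible of degree $n$ with root $y-u$ generating $E/F$, and Lemma~\ref{propcombinationstich} yields $\cO_Q=\cO_P[y-u]=\cO_P[y]$. For $(\Rightarrow)$, the residue field extension at $Q|P$ is trivial, so I choose $u\in\cO_P$ with $y\equiv u\pmod Q$ and set $y':=y-u$; then $v_Q(y')\geq 1$, $\cO_P[y']=\cO_P[y]=\cO_Q$, and the minimal polynomial of $y'$ over $F$ is $f(T+u)$. The crux is showing $v_Q(y')=1$. I argue by contradiction: if $v_Q(y')=m\geq 2$, then expanding a uniformizer $\pi_Q$ of $Q$ in the $\cO_P$-basis $1,y',\dots,(y')^{n-1}$ as $\pi_Q=\sum_{i=0}^{n-1}c_i(y')^i$ with $c_i\in\cO_P$, each term satisfies $v_Q(c_i(y')^i)=n\,v_P(c_i)+im$, which lies in $n\vZ_{\geq 0}\cup\{\infty\}$ for $i=0$ and is $\geq m\geq 2$ for $i\geq 1$; hence $v_Q(\pi_Q)$ is forced to be either $0$ or $\geq 2$, contradicting $v_Q(\pi_Q)=1$. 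Thus $y'$ is a uniformizer of $Q$, and Lemma~\ref{propcombinationstich} concludes that $f(T+u)$ is Eisenstein.

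Part (ii) is structurally the same argument applied to $z:=1/y$, which automatically satisfies $v_Q(z)>0$ since $y\notin\cO_Q$, so no shift is needed. Dividing $f(y)=0$ by $y^n$ shows $z$ is a root of $T^n f(1/T)$, and since $a_0\neq 0$ its monic scaling $a_0^{-1}T^n f(1/T)$ is the minimal polynomial of $z$ over $F$; I read ``$T^n f(1/T)$ is Eisenstein'' as saying this monic minimal polynomial is Eisenstein. Then $(\Leftarrow)$ follows directly from Lemma~\ref{propcombinationstich}, and $(\Rightarrow)$ follows from the same free-module valuation argument as above, applied directly to $z$ in place of $y'$. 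The main obstacle throughout is precisely the step $v_Q(y')=1$ (resp.\ $v_Q(z)=1$): this is where the algebraic hypothesis $\cO_P[\,\cdot\,]=\cO_Q$ must be converted into a valuation statement, and both the total ramification (free module of rank $n$) and the trivial residue field (enabling the initial shift by $u$) do the real work.
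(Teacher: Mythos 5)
Your proof is correct and follows essentially the same route as the paper: you pick $u\in\cO_P$ representing the residue of $y$ at $Q$, expand a uniformizer of $Q$ as an $\cO_P$-combination of powers of $y-u$, and use $e(Q|P)=n$ together with the ultrametric inequality to force $v_Q(y-u)=1$, with the converse and part (ii) handled via Lemma \ref{propcombinationstich} and the substitution $1/y$. Your phrasing by contradiction (splitting on $v_P(c_0)$) and the explicit remark that $\cO_Q$ is free of rank $n$ over $\cO_P$ are only cosmetic differences from the paper's argument.
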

\begin{proof}
Let us prove (i). Suppose $\cO_P[y]=\cO_Q$, then there exists $u\in \cO_P$ such that $y(Q)=u(Q)=u(P)$. Let $z=y-u$.
The minimal polynomial of $z$ is $f(T+u)$, which implies that if we can prove that $v_Q(z)=1$, it will follow that $f(T+u)$ is Eisenstein with respect to $P$.
It is easily seen that $\cO_P[y]=\cO_P[z]=\cO_Q$, therefore if $x$ is a uniformizer for $Q$, we can write
\[x=\sum^{n-1}_{i=0}a_i z^i\]
for some $a_i\in \cO_P$. Whence,
\[1=v_Q\left(\sum^{n-1}_{i=0}a_i z^i\right)\geq \min_{i\in \{0,\dots,n-1\}}\{v_Q(a_i)+iv_Q(z)\}.\]
Since $x(Q)=z(Q)=0$, we get $v_Q(a_0)=nv_Q(a_0)\geq n>1$, which forces $v_Q(z)=1$, as $n\geq 2$.

Now suppose that $f(T+u)$ is Eisenstein with respect to some $P$. It follows that $z=y-u$ is a uniformizer for $Q$, which implies $\cO_P[y]=\cO_P[z]=\cO_Q$.

The claim (ii) easily follows by applying (i) to $\overline y=1/y$ with $u=0$.
\end{proof}
The following corollary shows that the polynomials which are Eisenstein after a Moebius transformation are not more than the ones which are Eisenstein after either a shift or an inversion. This result is not needed in the proof of the main theorem, nevertheless we include it for completeness (in the case of $\vZ$, this question was asked in \cite{bib:heyman2014shifted}).
\begin{corollary}
Let $f$ be a polynomial of degree $n\geq 2$ and $P$ a place of $F$. The following are equivalent:
\begin{itemize}
\item[(i)] There exist $h,s,l,j\in \cO_P$ such that $hj-sl\in \cO_P^*$ and $g(T)=(lT+j)^n f(\frac{hT+s}{lT+j})$ is Eisenstein;
\item[(ii)] One of the following two occurs: 
\begin{itemize}
\item $f(T+u)$ is Eisenstein with respect to $P$ for some $u\in \cO_P$.
\item $T^nf(1/T)$ is Eisenstein with respect to $P$.
\end{itemize}
\end{itemize}  
\end{corollary}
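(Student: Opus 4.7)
The implication (ii)$\Rightarrow$(i) is immediate by direct substitution: the tuple $(h,s,l,j)=(1,u,0,1)$ satisfies $hj-sl=1\in\cO_P^*$ and yields $g(T)=f(T+u)$, while $(h,s,l,j)=(0,1,1,0)$ satisfies $hj-sl=-1\in\cO_P^*$ and yields $g(T)=T^n f(1/T)$. All the work is in the converse.

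For (i)$\Rightarrow$(ii), assume $g$ is Eisenstein with respect to $P$. Then $g$ is irreducible, hence so is $f$; set $E=F[y]/(f(y))$. Since $hj-sl\in\cO_P^*$, the Moebius transformation $\phi(T)=(hT+s)/(lT+j)$ is invertible over $\cO_P$, and I would introduce $z:=\phi^{-1}(y)=(jy-s)/(h-ly)\in E$. A direct check using the expansion $g(T)=\sum_i a_i(hT+s)^i(lT+j)^{n-i}$ shows $g(z)=0$, so $z$ is a root of $g$ and $F(z)=F(y)=E$. Lemma \ref{propcombinationstich} then supplies a totally ramified extension of places $Q\mid P$ in $E:F$ at which $z$ is a uniformizer. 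The plan from here is to apply Proposition \ref{fund_thm_ramified} to the element $y$ at this $Q$.

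Since $\cO_P$ is a discrete valuation ring, $j$ is either a unit or lies in $P$, and this dichotomy matches exactly the holomorphic/non-holomorphic split of Proposition \ref{fund_thm_ramified}. If $j\in\cO_P^*$, then $y=\phi(z)=(hz+s)/(lz+j)$ evaluates at $Q$ to $s/j\in\cO_P/P$, so $y$ is holomorphic; I set $u=s/j\in\cO_P$. The identity $y-u=z(h-ly)/j$ together with the observation that $(h-ly)(Q)=(hj-sl)/j$ is a unit gives $v_Q(y-u)=v_Q(z)=1$, so $y-u$ is a uniformizer of $Q$. Lemma \ref{propcombinationstich} then yields $\cO_P[y]=\cO_P[y-u]=\cO_Q$, and Proposition \ref{fund_thm_ramified}(i) concludes that $f(T+u)$ is Eisenstein.

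If $j\in P$, then $hj-sl\in\cO_P^*$ forces both $s$ and $l$ to be units, and $y$ acquires a pole at $Q$. Rewriting $z=(j-sw)/(hw-l)$ with $w:=1/y$, the unit $l$ controls the denominator at $Q$, while $v_Q(j)\geq n>1$ forces $v_Q(sw)=v_Q(z)=1$, so $w$ is a uniformizer of $Q$. Lemma \ref{propcombinationstich} then gives $\cO_P[1/y]=\cO_Q$, and Proposition \ref{fund_thm_ramified}(ii) concludes that $T^n f(1/T)$ is Eisenstein. The main obstacle is really the bookkeeping of the Moebius transformation: once one correctly identifies the case split as being on $j$ (i.e., on where $\phi$ sends the locus $z=0$ in $\vP^1$), both cases reduce cleanly to the preceding results.
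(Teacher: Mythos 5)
Your proof is correct and takes essentially the same route as the paper's: both pull $y$ back through the inverse Moebius map to a root $z=\tilde y$ of the Eisenstein polynomial $g$, obtain the totally ramified place $Q$ where $z$ is a uniformizer via Lemma \ref{propcombinationstich}, and then use the unit $hj-sl$ to show that $y-s/j$ (resp.\ $1/y$) is a uniformizer, concluding with Proposition \ref{fund_thm_ramified}. Splitting the cases on whether $j\in\cO_P^*$ rather than on whether $y$ is holomorphic at $Q$ is only a reorganization of the same dichotomy (the paper verifies $j(Q)\neq 0$ in its holomorphic case), so no substantive difference.
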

\begin{proof}
Clearly (ii) implies (i). Let us now prove that (ii) follows from (i). Let $m'(T)=\frac{aT+b}{cT+d}$ be the inverse of $m(T)=\frac{hT+s}{lT+j}$.
Suppose $y$ is a zero of $f(T)$ and is holomorphic at the totally ramified place $Q\subseteq F(y)$ lying over $P$. The element 
$\tilde y:=\frac{ay+b}{cy+d}$ is then a zero of the Eisenstein polynomial $g(T)$ therefore by 
\ref{propcombinationstich} and the fact that $F(y)=F(\tilde y)$ we have that $\cO_P[\tilde y]=\cO_Q$. 
Suppose $y$ is holomorphic at $P$, then it is enough to show that 
$\cO_P[y]=\cO_P[m(\tilde y)]=\cO_Q$.
To this end, we observe that
\[y(Q)=\frac{h(Q)\tilde y(Q)+s(Q)}{l(Q)\tilde y(Q)+j(Q)}=
\frac{s(Q)}{j(Q)}.\]
By the above equation it follows that the element 
\begin{equation}\label{equatonforz}
z=y-s/j=\frac{hj-sl}{j(l\tilde{y}+s)}\tilde y
\end{equation}
 has valuation 
greater than $1$ at $Q$ and $v_P(hj-sl)=0$ as $v_P(ad-bc)=0$.
As $y$ is holomorphic at $Q$, we have that $j(Q)\neq 0$ since
\begin{itemize}
\item if $s(Q)\neq 0$, then $j(Q)\neq 0$ since $y(Q)=s(Q)/j(Q)$
\item suppose that $s(Q)=0$, then $j(Q)$ cannot be zero as $h(Q)j(Q)-s(Q)l(Q)\neq 0$.
\end{itemize}
If we can show that $z$ has valuation $1$ we are done, as we would have by \eqref{equatonforz}
\[\cO_Q=\cO_P[z]=\cO_P[y].\]
Since $v_Q(j)=0$, we have
\[1\leq v_Q(z)=1-v_Q(l\tilde y+ s)\leq 1\]
which concludes the proof in the case in which $y$ is holomorphic at $Q$.
If $y$ is not holomorphic at $Q$, we choose $\tilde y$ as before and $t=1/y$. Now it holds $t=\frac{l \overline y+ j}{h\overline y + s}$ and the previous considerations can be applied again to show $\cO_P[t]=\cO_Q$.

%
%
 



\end{proof}

\subsection{On the density of totally ramified extensions of fixed degree}

The following lemma gives the exact restriction for the set of possible shifts which are candidates for turning a given poylnomial $f(X)$ into an Eisenstein polynomial. 

\begin{lemma}
Let $\cO_P$ be a valuation ring of a function field $F$ and let $f\in \cO_P[T]$ of degree $d=\deg(f)\geq 2$. Let $G_P\subseteq \cO_P$ be a set of representatives of
$\cO_P/P$.  
We have that $f(T+i)$ is Eisenstein with respect to $P$  for some $i\in \cO_P$ if and only if $f(T+i)$ is Eisenstein with respect to $P$ for some $i\in G_P$.
\end{lemma}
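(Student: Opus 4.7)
One direction is immediate: since $G_P\subseteq \cO_P$, any shift by an element of $G_P$ is in particular a shift by an element of $\cO_P$.

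For the nontrivial direction, suppose $f(T+a)$ is Eisenstein with respect to $P$ for some $a\in \cO_P$. The plan is to write $a=i+b$ with $i\in G_P$ and $b\in P$ (possible because $G_P$ is a full set of representatives), set $g(T):=f(T+i)$, and verify that $g(T)$ is Eisenstein. Observe that $f(T+a)=g(T+b)$, so, equivalently, $g(T)=f(T+a)\big|_{T\mapsto T-b}$, which lets us read off the coefficients of $g$ from those of the Eisenstein polynomial $f(T+a)$.

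Write $f(T+a)=\sum_{k=0}^{d}h_k T^k$ with $v_P(h_d)=0$, $v_P(h_k)\geq 1$ for $0\leq k<d$, and $v_P(h_0)=1$. Expanding, the $j$-th coefficient of $g(T)$ is
\[
g_j=\sum_{k=j}^{d}\binom{k}{j}h_k(-b)^{k-j}.
\]
I would then check the three Eisenstein conditions in turn. For $j=d$: $g_d=h_d$, so $v_P(g_d)=0$. For $0<j<d$: every term with $k<d$ has valuation at least $1$ from $h_k$ alone, while the $k=d$ term has valuation at least $d-j\geq 1$ from the power of $b$; hence $v_P(g_j)\geq 1$. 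For $j=0$: the term $h_0$ contributes exactly valuation $1$, while each remaining term $h_k(-b)^k$ with $1\le k\le d-1$ has valuation $\geq 1+k\geq 2$, and the top term $h_d(-b)^d$ has valuation $\geq d\geq 2$; consequently $v_P(g_0)=v_P(h_0)=1$.

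The only subtle point—and the place where the hypothesis $d\ge 2$ is used—is ensuring that the constant term $g_0$ has valuation exactly $1$ rather than being accidentally pushed up to valuation $\ge 2$; this is what the bound $d\ge 2$ guarantees for the top-degree contribution $h_d(-b)^d$. Combining the three checks yields that $g(T)=f(T+i)$ is Eisenstein with $i\in G_P$, as required.
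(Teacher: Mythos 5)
Your proof is correct and takes essentially the same route as the paper: write the shift as an element of $G_P$ plus an element $b\in P$, and check that the residual shift by $b$ does not disturb the Eisenstein conditions (coefficients mod $P$, constant term of valuation exactly $1$, with $d\ge 2$ handling the top-degree contribution). The paper packages this as the observation that for $p\in P$ the polynomial $f(T+p)$ is Eisenstein if and only if $f(T)$ is, whereas you verify the same valuations by direct coefficient expansion.
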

\begin{proof}
One implication is obvious so let us prove the other direction.
Suppose that $f(T)=\sum^d_{i=0} a_i T^i$ for $a_i\in \cO_P$ and that $f(T+i)$ is $P$-Eisenstein for some $i\in \cO_P$.
Let us first show that, for any $p\in P$, the polynomial $f(T+p)$ is Eisenstein if and only if $f(T)$ is Eisenstein. This follows by the fact that the conditions modulo $P$ of $f$ are easily verified
as $f(T+p)=f(T) \mod P$. The condition modulo $P^2$ reduces to check that $f(p)\neq 0 \mod P^2$, which indeed holds as $f(p)=\sum^d_{i=0} a_i p^i=a_0\neq 0\mod P$.
The proof is now straightforward, since any $i\in \cO_P$ can be written as $j+p$ for some $j\in G_P$ and $p\in P$: we have that  $f(T+i)=f(T+j+p)$ is $P$-Eisenstein and then $f(T+j)$ is $P$-Eisenstein for $j\in G_P$.
\end{proof}

We are now ready to extract the density of the set the set of polynomials in question using the local to global principle together with some tools from measure theory and linear algebra. With a small abuse of notation, in what follow we will identify with $\vH^{n+1}$ the set of degree $n$ polynomials over a holomorphy ring $\vH$.
\begin{theorem}\label{thm:probabilitytotramextension}
Let $\vH_{\cS}$ be a holomorphy ring of a global function field $F/\mathbb F_q$ having full constant field $\vF_q$. The set of polynomials $f\in \vH[T]$ of degree $n\geq  3$ such that the ring $E=F[y]/(f(y))$ is a field and  $E:F$ is nicely totally ramified with respect to $f$, has density 
\[R(n,\vH)=1-\prod_{P\in \cS} \frac{(q^{\deg(P)}-1)^2(q^{\deg(P)}+1)}{q^{\deg(P)(n+2)}}.\]
\end{theorem}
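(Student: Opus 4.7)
The plan is to apply Theorem~\ref{thm:main_density_function_field} with a carefully chosen family of local sets $U_P$ that detects nice total ramification at places above $P$. Identifying a polynomial $f(T)=\sum_{i=0}^n a_i T^i\in\vH[T]$ with its coefficient vector $(a_0,\ldots,a_n)\in\vH^{n+1}$, I would put, for each $P\in\cS$,
\[U_P=\bigl\{(a_0,\ldots,a_n)\in\widehat{\cO}_P^{n+1}\colon f(T+u)\text{ is Eisenstein at }P\text{ for some }u\in G_P,\text{ or }T^nf(1/T)\text{ is Eisenstein at }P\bigr\},\]
where $G_P$ is a set of representatives of $\cO_P/P$. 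By Proposition~\ref{fund_thm_ramified} together with the preceding shift-reduction lemma, $f\in U_P$ is equivalent to $F[y]/(f(y)):F$ being nicely totally ramified at some place above $P$, and fieldness of $F[y]/(f(y))$ follows for free from the irreducibility of Eisenstein polynomials.

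To compute $\mu_P(U_P)$ I would partition $U_P$ into $q^{\deg P}+1$ pairwise disjoint clopen pieces. For each fixed $u\in G_P$, the substitution $T\mapsto T+u$ induces a measure-preserving upper-triangular change of coordinates on $\widehat{\cO}_P^{n+1}$, so the set $\{f:f(T+u)\text{ is Eisenstein at }P\}$ has the same $\mu_P$-measure as the corresponding set for $u=0$, namely $(q^{\deg P}-1)^2/q^{(n+2)\deg P}$ (one factor $(q^{\deg P}-1)/q^{\deg P}$ for the leading coefficient being a unit, one factor $(q^{\deg P}-1)/q^{2\deg P}$ for the constant term in $P\setminus P^2$, and $1/q^{\deg P}$ for each of the $n-1$ middle coefficients in $P$). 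Distinct shifts yield disjoint sets because $f(T+u)$ Eisenstein at $P$ forces $f\equiv a_n(T-u)^n\pmod P$, pinning down $u\bmod P$; the inversion piece has the same measure and is disjoint from every shift piece since it forces $f\equiv\text{const}\pmod P$. Summing,
\[\mu_P(U_P)=\frac{(q^{\deg P}+1)(q^{\deg P}-1)^2}{q^{(n+2)\deg P}},\]
and clopenness of $U_P$ gives $\mu_P(\partial U_P)=0$ for free.

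The hypothesis \eqref{fund_cond_loc_to_glob} will be deduced from Theorem~\ref{condition_verified_polynomials_THEOREM} applied to the shift and inversion components separately. The inversion component forces $a_1\equiv\cdots\equiv a_n\equiv 0\pmod P$, so the manifestly coprime pair $(a_1,a_2)\in\vH[a_0,\ldots,a_n]$ already suffices. For the shift component, the identity $f\equiv a_n(T-u)^n\pmod P$ cuts out a subvariety of codimension $n-1\geq 2$ in coefficient space, and from its defining ideal one can extract two polynomials coprime in $\vH[a_0,\ldots,a_n]$ (e.g.\ for $n=3$ the pair $3a_1a_3-a_2^2$ and $27a_0a_3^2-a_2^3$ works, with analogous constructions for each $n\geq 3$); Theorem~\ref{condition_verified_polynomials_THEOREM} together with a union bound then deliver the decay.

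With all hypotheses of Theorem~\ref{thm:main_density_function_field} satisfied, part (iii) applied to $T=\emptyset$ yields $\nu(\{\emptyset\})=\prod_{P\in\cS}(1-\mu_P(U_P))$, and the set of polynomials in the statement is $\pi^{-1}(2^{\cS}\setminus\{\emptyset\})$, whose density is $1-\nu(\{\emptyset\})$; substituting the expression for $\mu_P(U_P)$ obtained above yields $R(n,\vH)$. The principal technical obstacle I anticipate is the uniform-in-$n$ construction of a coprime pair cutting out the shift locus: the low-$n$ examples suggest a clean pattern (built from Newton-style identities among elementary symmetric functions of the coinciding roots), but verifying coprimality uniformly across all characteristics requires a careful algebraic check.
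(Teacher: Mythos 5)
Your overall strategy is the paper's: the same local sets $U_P$ (shifted and inverted Eisenstein polynomials), the same disjointness argument over the $q^{\deg(P)}+1$ pieces, the same value $\mu_P(U_P)=(q^{\deg(P)}+1)(q^{\deg(P)}-1)^2q^{-(n+2)\deg(P)}$, and the same conclusion via Theorem \ref{thm:main_density_function_field}(iii) with $T=\emptyset$. The problem is that the step you explicitly defer --- producing two coprime polynomials cutting out the shift locus so that Theorem \ref{condition_verified_polynomials_THEOREM} applies --- is the substantive content of verifying \eqref{fund_cond_loc_to_glob}, and your proposed route for it breaks. For $n=3$ the pair $3a_1a_3-a_2^2$ and $27a_0a_3^2-a_2^3$ reduces in characteristic $3$ to $-a_2^2$ and $-a_2^3$, which are not coprime; the same collapse threatens any ``Newton-style'' pair built from the identity $f\equiv a_n(T-u)^n \bmod P$ whenever $\Char(F)\mid n$. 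The paper has to split into cases: if $\Char(F)\mid n$ it uses the pair $(\disc_z(f),\,x_{n-1})$; if $\Char(F)\nmid n$ it eliminates $z$ via the linear relation from the coefficient of $T^{n-1}$ (namely $x_{n-1}+nx_nz\equiv 0$), substitutes into $f(z)$ and $f'(z)$ after clearing powers of $nx_n$, and proves coprimality of the resulting $f_0,f_1$ by isolating the coefficient of $x_0$ and excluding the only possible common factor $x_n$. Without this construction (or a substitute valid in all characteristics), condition \eqref{fund_cond_loc_to_glob} is not established and Theorem \ref{thm:main_density_function_field} cannot be invoked.

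A second gap is your identification of the set in the statement with $\pi^{-1}(2^{\cS}\setminus\{\emptyset\})$. Definition \ref{def:goodtotram} does not require the totally ramified place of $F$ to lie in $\cS$: a polynomial in $\vH_{\cS}[T]$ can be nicely totally ramified only at a place of the finite complement $\vP_F\setminus\cS$, and such a polynomial belongs to the set of the theorem while lying in $\pi^{-1}(\{\emptyset\})$, since no $U_P$ is attached to places outside $\cS$. The paper's Claim 1 is devoted to showing this discrepancy is density-zero: if $f(T+u)$ (or $T^nf(1/T)$) is Eisenstein at $P\notin\cS$, the upper-triangularity of the shift forces all coefficients of $f$ into $\cO_P$, hence into the smaller holomorphy ring $\vH_{\cS\cup\{P\}}$, and a Riemann--Roch count gives $\overline{\vD}_{\cS}\bigl(\vH_{\cS\cup\{P\}}^{n+1}\bigr)=0$. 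Your proposal omits this, so the equality between the density you compute and $R(n,\vH)$ is not justified as written. A smaller omission of the same kind: membership in $U_P$ gives irreducibility but not separability, while Definition \ref{def:goodtotram} requires a separable extension, so you also need the easy remark that inseparable polynomials form a set of density zero.
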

\begin{proof} 
First, we should observe that we can restrict to the separable case, as the set of inseparable polynomials has density zero. 
Let us denote  the complement of $\cS$ in $\vP_F$ by $\cS^c$.
First, we should prove that if ramification occurs, it occurs with probability one at a place of $\cS$. In fact, the polynomials $f(T) \in \vH_{\cS}[T]$ defining a totally ramified extension $E:F$ at a place of $\cS^c$, do not contribute to  $R(n,\vH)$. This is not surprising, as the definition of the density depends on the choice of $\cS$. In particular we prove:
\\ \textbf{Claim 1.} \emph{The density of degree $n$ polynomials in $\vH_{\cS}[T]$ for which $E:F$ is nicely totally ramified equals the density of degree $n$ polynomials in $\vH_{\cS}[T]$ for which $E:F$ is nicely totally ramified at a place of $\cS$.}\\
\emph{Proof of Claim 1.}\\
Let \[A:=\{f\in \vH_{\cS}[T]: \; f \; \mbox{ is Eisenstein for some shift $i\in G_P$ and some place}\; P\in \cS^c \}\]
and 
\[B=\{f\in \vH_{\cS}[T]: \quad T^nf(1/T) \; \mbox{ is Eisenstein for some place}\; P\in \cS^c \}\]  
It is enough to show that $C=A\cup B$ has density zero.
First, we show that if $f\in C$, then $f\in \cO_P[T]$ for some 
$P\in \cS^c$. 
Clearly, if $f\in B$ this follows immediately by the definition of Eisenstein polynomial. Let now suppose that 
$f\in A$. Let $f=\sum^n_{j=0}a_j T^j$ and $i\in G_P$ such that $f(T+i)=\sum^n_{j=0}b_j T^j$ is Eisenstein.
It is necessary now to observe that  the map defined by the shift $\sigma_i: F^{n+1}\longrightarrow F^{n+1}$ defined by $\sigma_i(f)=f(T+i)$ is linear, upper triangular in the basis $\{1,\dots, T^n\}$, and any element on the diagonal is $1$. This shows recursively that if $a_{j}\in \cO_P$ for all $j\in \{\ell+1, \dots, n\}$, then $a_{\ell}\in \cO_P$. This is enough to conclude, since $a_n=b_n\in \cO_P$.

Thanks to what we just proved, it is now clear that $C\subseteq \bigcup_{P\in \cS^c}\vH_{\cS\cup P}^{n+1}$. 
Since the complement of $\cS$ is finite, we have that 
\[\overline{\vD}_{\cS}(C)\leq \sum_{P\in \cS^c}\overline{\vD_{\cS}}(\vH_{\cS\cup P}^{n+1}).\]
It follows that if we can show 
$\overline{\vD}_{\cS}(\vH_{\cS\cup P}^{n+1})=0$ we are done.
To see this, let us consider for $D\in \cD_\cS$
the quantity $Q(D,P):=|\cL(D)\cap \vH_{\cS\cup P}|$. For a divisor
 $D\in \cD_\cS$ of large degree, $Q(D,P)$ is easy to estimate, as one can write $D=D'+mP$
with $P\notin \supp(D')$ and $m$ positive integer. Then
$Q(D,P)=Q(D',P)=  q^{\deg(D')+1-g}$.
It follows that 
\[\frac{|Q(D,P)|}{q^{\ell(D'+mP)}}=\frac{|Q(D',P)|}{q^{\ell(D'+mP)}}\leq C q^{-m\deg(P)}\]
which concludes the proof, as
\[\overline{\vD_{\cS}}(\vH_{\cS\cup P})=\limsup_{D\rightarrow \infty} \frac{ |Q(D,P)|^{n+1}}{q^{(n+1)\ell(D)}}.\]
\emph{End of the proof of Claim 1}\\
From now on in this proof $\cS$ will be fixed, therefore we will denote $\vH_\cS$ and $\vD_\cS$ by $\vH$ and $\vD$ respectively.
For any $P\in \cS$, let $\widehat \cO_P$ be the $P$-adic completion of $\cO_P$. With a small abuse of notation, let us identify  with $\widehat{\cO}_P^{n+1}$ the set of polynomials of degree $n$ in $\widehat{\cO}_P[T]$. Let now $G_P$ be a set of representatives of the elements $\widehat{\cO}_P/P$. For any $a \in G_P$ we define the linear maps
\[\sigma_a:\widehat{\cO}_P^{n+1}\longrightarrow\widehat{\cO}_P^{n+1}\]
\[p(T)\mapsto p(T+a)\]
and
\[\inv: \widehat{\cO}_P^{n+1} \longrightarrow \widehat{\cO}_P^{n+1}\]
\[p(T)\mapsto T^n p\left(\frac{1}{T}\right).\]
Let $V_P$ be the set of Eisenstein polynomial of degree $n$ over $\widehat\cO_P$ i.e. 
\[V_P=\underbrace{(P\widehat{\cO}_P\setminus P^2\widehat{\cO}_P)\times P\widehat{\cO}_P\times\cdots \times P\widehat{\cO}_P\times (\widehat{\cO}_P \setminus P\widehat{\cO}_P)}_{n+1}. \]
We  want now to compute the measure of the set of degree $n$ polyomials which are Eisenstein after some shift or inversion. \\
\textbf{Claim 2.} \emph{The  $P$-measure of $U_P:=\left(\bigcup_{a\in G_P} \sigma_a(V_P)\right)\,\cup \inv(V_P)$ is 
\[\mu_P(U_P)=q^{-\deg(P)(n-1)}(1-q^{\deg(P)})^2(q^{\deg(P)}+1).\]}\\
\emph{Proof of Claim 2.}\\
We first want to show that the union above is disjoint. 
First, let us show that for any $a\neq 0$ in $G_P$ we have $\sigma_a(V_P)\cap V_P=\emptyset$. This is easy, since for $g(T)\in \sigma_a(V_P)\cap V_P$, we have that the degree zero coefficient of $g_a(T)=g(T+a)$ is $g(a)$ and $g(T)\in V_P$ which implies that it can be written as 
$g(T)=b T^n+p\, h(T)$ for $p\in P$, $b\in \cO_P\setminus P$: it follows that $g(a)\notin P$.
As an easy consequence of the previous fact, we have that $\sigma_a(V_P)\cap \sigma_{a'}(V_P)=\emptyset$ for $a,a'\in G_P$ and $a\neq a'$. In addition, the intersection of $\inv(V_P)$ with any of the $\sigma_a(V_P)$ is also empty. To see this, suppose that $g_a(T)\in \inv(V_P)$. The fact that the degree $n$ coefficient of $g_a(T)$ is in $\cO_P\setminus P$, is in contradiction with the fact that $g_a(T)\in \inv(V_P)$.
In order to get the final claim, it is enough to compute the measures of each one of the $\sigma_a(V_P)$ and of $\inv(V_P)$.
It is easy to see that the maps $\sigma_a$ and $\inv$ are indeed $F$-linear maps with determinant one, therefore $\mu_P(\sigma_a(V_P))=\mu_P(\inv(V_P))=\mu_P(V_P)$ for any $a\in G_P$. The problem is then reduced to computing $\mu_P(V_P)$. 
As $V_P$ is diagonal, one can obtain the measures of each of its component and then compute the product in order to get the wanted result. These measures are indeed easy to compute
\[\mu_P(P\widehat{\cO}_P)=q^{-\deg(P)},\] \[\mu_P(P\widehat{\cO}_P\setminus P^2\widehat{\cO}_P)=q^{-\deg(P)}-q^{-2\deg(P)},\] and 
\[\mu_P(\widehat{\cO}_P \setminus P\widehat{\cO}_P)=1-q^{-\deg(P)}.\]
It follows that
\[\mu_P(U_P)=\mu_P(\inv(V_P))+\sum_{a\in G_P}\mu_P(V_P)=(1+q^{\deg(P)})\mu_P(V_P)\]
\emph{End of the proof of Claim 2}\\
Clearly, we have an embedding $\iota_P$ of $\vH$ into $\widehat\cO_P$. Using this embedding and Proposition \ref{fund_thm_ramified} we have that the polynomial $f\in \vH[T]$ defines a nicely totally ramified extension of $F$ at $P$ if and only if $\iota_P(f)\in U_P$. In fact, if $f$ defines a nicely totally ramified extension $E=F[y]/f(y)$ then we are in one of the two cases of Proposition \ref{fund_thm_ramified}, as either $\cO_P[y]=\cO_Q$ or $\cO_P[1/y]=\cO_Q$ for some $Q|P$ totally ramified, which implies that either $\iota_P(f)\in \sigma_a(V_P)$ for some $a\in G_P$ or $\iota_P(f)\in \inv(V_P)$. 
Vice versa, suppose that $\iota_P(f)\in U_P$, then $f(T+a)$ is Eisenstein for some $a$ or $T^nf(1/T)$ is Eisenstein. This implies that the extension $F[y]/(f(y))$ is totally ramified by Lemma \ref{propcombinationstich}, as  $F(y+a)=F(1/y)=F(y)$. 
We have now to show that the total ramification is nice. We observe that, if $Q|P$ is the totally ramified extension then
$\cO_Q$ is the integral closure of $\cO_P$ by Lemma \ref{propcombinationstich}. Therefore, if $\iota_P(f)\in \sigma_a(V_P)$ for some $a\in G_P$, then $y$ is holomorphic at $Q$, otherwise, if $\iota_P(f)\in \inv(V_P)$, then $1/y$ is holomorphic at $Q$, from which it follows that we end up again in one of the cases of Proposition \ref{fund_thm_ramified}.\\
\textbf{Claim 3.} \emph{Let $t$ be a positive integer and $\cS_t$ be the set of places of $\cS$ of degree larger than $t$. We have}
\[\lim_{t\rightarrow \infty }\overline{\vD}(\{a\in \vH^{n+1}\:|\: a\in U_P \;\text{for some }\; P\in \cS_t\})=0.\]
\emph{Proof of Claim 3.}\\
By recalling the definition of $U_P$ we can split the limit above as $\lim_{t\rightarrow \infty} \overline{\vD}(I_t)+\overline{\vD}(A_t)$ where
\[I_t=\{a\in \vH^{n+1}\:|\: a\in \inv(V_P) \;\text{for some }\; P\in \cS_t\}\]
and
\[A_t=\{a\in \vH^{n+1}\:|\: a\in \bigsqcup_{a\in G_P} \sigma_a(V_P) \;\text{for some }\; P\in \cS_t\}.\]
Let us first deal with the limit in $I_t$, which is the easy part.
By recalling the definition of the linear operator $\inv$, and by considering only the conditions on the coefficients of the terms of degree $n$ and $n-1$, it is easy to observe that 
\[I_t\subseteq \{(x_0,\dots, x_n)\in \cL(D)^n\;|\; x_{n-1}\equiv x_n\equiv 0 \mod P \;\text{for some }\; P\in \cS_t\}=I_t'.\]
As $x_n$ and $x_{n-1}$ are indeed coprime polynomials, $\lim_{t\rightarrow \infty} \overline{\vD}(I_t)=\lim_{t\rightarrow \infty} \overline{\vD}(I_t')=0$ by Theorem \ref{condition_verified_polynomials_THEOREM}.

It remains to show that $\lim_{t\rightarrow \infty} \overline{\vD}(A_t)=0$. 
We now produce two equations associated to $A_t$ via elementary elimination theory, the final claim will follow again by Theorem \ref{condition_verified_polynomials_THEOREM}.
Let us write a generic shift for a generic polynomial of degree $n$:
\[f(T)=\sum^n_{i=0} x_i T^i\in F[x_1,\dots, x_n,T]\]
\[f(T+z)=\sum^n_{i=0}c_i(x_i,\dots, x_n,z)T^j\in F[x_1,\dots, x_n,z,T],\]
for some polynomial functions $c_i(x_i,\dots, x_n,z)\in F[x_i,\dots, x_n,z]$. Notice that the polynomial $c_i$ depends only on the last $n-i$ variables since the shift map is upper triangular.
In particular, let $(y_0,\dots, y_n)$ be a specialization to elements of $\vH$ such that the given polynomial $f(T)=\sum^n_{i=0} y_i T^i$ is in $A_t$. Then,  there exists $\overline z\in \vH$ such that $c_0(y_0,\dots y_n,\overline z)\equiv c_1(y_1,\dots,y_n,\overline z)\equiv c_{n-1}(y_{n-1},y_n,\overline z)\equiv 0 \mod P$ for some $P\in \cS$.
Let us now explicitly compute $c_0,c_1$ and $c_{n-1}$:
\[c_0(x_0,\dots x_n, z)=f(z)=\sum^n_{i=0} x_iz^{i}\]
\[c_1(x_1,\dots x_n, z)=f'(z)=\sum^n_{i=1} i x_iz^{i-1}\]
\[c_{n-1}(x_{n-1},x_n, z)=x_{n-1}-\tilde{n}x_n z,\]
where we set $\tilde n=-n$ to simplify the computations that follow. Observe that $c_{n-1}$ is distinct from $c_1$ as $n>2$.
Recall that we want to apply Theorem \ref{condition_verified_polynomials_THEOREM} so we have to eliminate the variable $z$, getting two equations. In order to do that, we have to distinguish two possibilities, depending on whether the characteristic of $F$ divides $n$ or not.
In case $n$ is divisible by $\Char(F)$, then two polynomials which eliminate $z$ are $\res_z(f(z),f'(z))=\disc_z(f(z))\in F[x_0\dots,x_n]$ and $x_{n-1}$.
Define $A_t'=\{f\in \vH^{n+1}\:|\: \disc_z(f)\equiv x_{n-1}\equiv 0\mod P \;\text{for some }\; P\in \cS_t\}$. By Theorem \ref{condition_verified_polynomials} we get 
that $\lim_{t \rightarrow \infty} \overline{\vD}(A_t')=0$.
Observe that by construction $A_t\subseteq A_t'$, from which the claim follows.

It remains to deal with the case in which $n$ is not divisible by $\Char(F)$.
We can eliminate the variable $z$ from the ideal 
\[\langle c_0(x_0,\dots x_n, z),c_1(x_1,\dots x_n, z),c_{n-1}(x_{n-1},x_n, z)\rangle\subseteq F[x_0,\dots, x_n, z]\]
by multiplying $c_0$  by $(\tilde n x_n)^{n-1}$ and $c_1$ by $(\tilde n x_n)^{n-2}$ and using the relation given by $c_{n-1}$, getting two polynomials: 

\[f_0(x_0,\dots,x_n):=x^n_{n-1}\tilde n^{-1}+\sum^{n-1}_{i=0} (\tilde n x_n)^{n-1-i} x_i x_{n-1}^i\]
\[f_1(x_1,\dots,x_n):=-x^{n-1}_{n-1}+\sum^{n-1}_{i=1} i(\tilde n x_n)^{n-2-(i-1)} x_i x_{n-1}^{i-1}.\]

We define now $A_t'\supseteq A_t$ as the set of specializations $(y_0,\dots,y_n)\in \vH^{n+1}$ such that
$f_0(y_0,\dots,y_n)\equiv f_1(y_1,\dots,y_n)\equiv 0 \mod P$ for some place $P$ of degree larger than $t$.
It remains to show that $f_0$ and $f_1$ are coprime, then we will get $\lim_{t\rightarrow \infty}\overline{\vD}(A_t)\leq \lim_{t\rightarrow \infty}\overline{\vD}(A_t')=0$ by Theorem \ref{condition_verified_polynomials}. First, observe that $f_1\neq 0$ as $c_1\neq c_{n-1}$ (since $n>2$).
Notice that $f_0=x_0 h_1(x_1,\dots,x_n) \tilde n^{n-1}+h_0(x_1,\dots,x_n)$ for some $h_0, h_1\in \vH[x_1,\dots, x_n]$. On the other hand, $x_0$ does not appear in $f_1$, which implies that, if there is an irreducible common factor $g$ of $f_0$ and $f_1$, then
it must divide both $h_0$ and $h_1$. In our specific case case, $h_1(x_1,\dots,x_n)=\tilde n^{n-1}x_n^{n-1}$ which forces $g=x_n$ but this is impossible, as 
for example $f_1(x_0,\dots,x_{n-1},0)\neq 0$. 

\emph{End of the proof of Claim 3}\\
We are now able to conclude the proof using Theorem \ref{thm:main_density_function_field}.
Let $R$ be the set of degree $n$ polynomials giving rise to a nice totally ramified extension and let us consider $R^c$.
By looking at the definition of the map $\pi$ in the Theorem \ref{thm:main_density_function_field} and by recalling the choice of the $U_P$'s, we have that 
$R^c=\pi^{-1}(\{\emptyset\})$.
Therefore, since Condition \eqref{fund_cond_loc_to_glob} of Theorem \ref{thm:main_density_function_field} is verified by the previous claim, we have that
\[\vD(R^c)=\vD(\pi^{-1}(\{\emptyset\}))=\nu(\{T\})=\prod_{P\in \cS\setminus T } (1-\mu_P(U_P))\]
from which the claim follow by Claim 2 and the fact that $\vD(R)=1-\vD(R^c)$.

\end{proof}

In the proof of the previous result the restriction $n>2$ is crucial only for the proof of Claim 3. In the following remark we briefly adapt the above strategy to the case $n=2$. 

\begin{remark}
For the sake of completeness we should also deal with the case $n=2$.
Let $R$ be the set of separable irreducible degree $2$ polynomials $f\in \vH[T]$ for which the extension $F[y]/(f(y))$ is nicely totally ramified. 
For this, let $\cS_t^c$ be the set of places of $\cS$ of degree at most $t$. For $P\in \cS_t^c$, let $U_P$ be defined as in the proof of Theorem \ref{thm:probabilitytotramextension}  and for $P\in \cS_t$ let $U_P=\emptyset$. For any fixed $t$, one easily observes that 
\[E_t:=\pi^{-1}(\{\emptyset\})\supseteq R^{c}\]
therefore 
\begin{equation}\label{eq:deg2}
\lim_{t\rightarrow \infty } \vD(E_t)\geq \vD(R^{c}).
\end{equation}
Now, for any fixed $t\in \vN$, Condition \eqref{fund_cond_loc_to_glob} is verified, since the set of $U_P$ is finite.
Using Theorem \ref{thm:main_density_function_field} as before one gets that the density of $E_t$ is
\[\vD(E_t)=\prod_{P\in \cS_t^c} \frac{(q^{\deg(P)}-1)^2(q^{\deg(P)}+1)}{q^{\deg(P)(4)}}\]
which is now a finite product.
Letting $t$ go to plus infinity, the sequence $\vD(E_t)$ converges to zero, which forces $\vD(R^c)=0$ and so 
$\vD(R)=1$.
\end{remark}
\begin{remark}
The reader should notice that the condition on the fact that the total ramification is good is of fundamental importance in the proof of the result: in fact,  it allows the conversion given by Proposition \ref{fund_thm_ramified}. It would be of great interest to understand what happens to the final density result of Theorem \ref{thm:probabilitytotramextension} when the condition in Definition \ref{def:goodtotram} is removed.
\end{remark}

\section{The density of rectangular Unimodular Matrices}\label{sec:unimodularmatrices}

Using the local to global principle, in this section we close the problem of computing the density of rectangular unimodular matrices with entries in an integrally closed subring of a global function field. As a special case of the main result, we also get the density of such matrices with entries the ring $\vF_q[x]$. Such matrices are of deep interest in coding theory since they are one of the key ingredients to define a convolutional code. In that context a rectangular unimodular matrix is usually said to be \emph{left prime}, see for example \cite[pg 122]{FORNASINI2004119} for a characterization of such condition.
\begin{definition}
Let $k,m$ be positive integers such that $k<m$. Let $M$ be a $k\times m$ matrix having entries in a commutative ring $R$.
We say that $M$ is \emph{rectangular unimodular} if it can be extended with $m-k$ rows in $R^n$ to an element of $GL_m(R)$. 
\end{definition}
The reader should notice that in the definition above we require the extension of $M$ to be an automorphism of $R^m$ as an $R$-module. 

\begin{problem}
What is the density of rectangular unimodular matrices of $\vH^{k\times m}$?
\end{problem}
The problem was first addressed in \cite{ guo2013probability} in the case of $\vH=\vF_q[x]$.  Unfortunately, the proof in \cite{guo2013probability} is not correct  as a priori there is no reason for the infinite sum in the proof of \cite[Theorem 1]{guo2013probability} to commute with the superior limit. This was also observed in \cite{micheli2016density}, where it is present a partial fix of the issue, in the case in which the number of columns are at least the double of the number of rows.
In what follows we provide the complete solution of the problem using the tools we developed in the first section for the general case of holomorphy rings. In order to do this, we have to construct the $p$-adic system of subsets used in Theorem \ref{thm:main_density_function_field}. This can be done using the following result proved in \cite{gustafson1981matrix}.

\begin{theorem}\label{dedekindextension}
Let $R$ be a Dedekind domain. Let $M$ be a $k\times m$ matrix with entries in $R$ and $I_M$ be the ideal generated by the determinants of the maximal minors of $M$. 
Then $M$ is rectangular unimodular if and only if $I_M=R$.
\end{theorem}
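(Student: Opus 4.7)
The plan is to prove the two directions separately, with the forward one a direct determinantal identity and the reverse one leveraging the structure of finitely generated projective modules over a Dedekind domain.

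For the forward direction, assume $M$ is rectangular unimodular, so there is an $N\in\GL_m(R)$ whose first $k$ rows coincide with $M$. Expanding $\det(N)$ by the Laplace rule along these first $k$ rows gives
\[\det(N)=\sum_{S}\pm\det(M_S)\,\det(N'_{S^c}),\]
where $S$ ranges over $k$-subsets of $\{1,\dots,m\}$, $M_S$ is the maximal minor of $M$ on the columns indexed by $S$, and $N'_{S^c}$ is the complementary $(m-k)\times(m-k)$ minor of $N$. Since each $\det(M_S)\in I_M$ and $\det(N)\in R^\times$, the identity forces $R^\times\cap I_M\neq\emptyset$, hence $I_M=R$.

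For the converse, assume $I_M=R$. First I would show that the map $M\colon R^m\to R^k$ (left multiplication on column vectors) is surjective: localizing at any maximal ideal $P$ of $R$, the hypothesis $I_M\not\subseteq P$ provides a $k\times k$ minor of $M$ that is a unit in the DVR $R_P$, and Cramer's rule then exhibits an explicit right inverse of $M\otimes R_P$. Surjectivity is a local property, so $M$ itself is surjective. Since $R^k$ is free, the short exact sequence
\[0\longrightarrow K\longrightarrow R^m\xrightarrow{\;M\;}R^k\longrightarrow 0,\qquad K:=\ker M,\]
splits, yielding $R^m\cong K\oplus R^k$ with $K$ a finitely generated projective $R$-module of rank $m-k$.

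The crucial step is to upgrade $K$ from projective to free. By the structure theorem for finitely generated projective modules over a Dedekind domain, $K\cong R^{m-k-1}\oplus J$ for some fractional ideal $J$, whose class in $\mathrm{Pic}(R)$ is the Steinitz invariant of $K$. Taking top exterior powers in $R^m\cong K\oplus R^k$ yields
\[R\;=\;\bigwedge^m R^m\;\cong\;\bigwedge^{m-k}K\;\otimes\;\bigwedge^k R^k\;\cong\;J,\]
so $J$ is principal and $K\cong R^{m-k}$ is free. To produce the explicit extension, pick a basis $v_1,\dots,v_{m-k}$ of $K\subseteq R^m$ and a right inverse $s\colon R^k\to R^m$ of $M$, and let $M'$ be the $(m-k)\times m$ matrix representing the projection $R^m\twoheadrightarrow K\cong R^{m-k}$ with respect to the splitting $R^m=K\oplus s(R^k)$. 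The block matrix $N=\bigl(\begin{smallmatrix}M\\ M'\end{smallmatrix}\bigr)$ then realizes the isomorphism $R^m\xrightarrow{\sim}R^k\oplus R^{m-k}$ coming from the decomposition, so $N\in\GL_m(R)$ and extends $M$ as required. The main obstacle is the Steinitz-class computation that forces $J\cong R$: the rest is either formal linear algebra or standard local-to-global bookkeeping, but this is precisely where the Dedekind hypothesis enters in an essential way and where the statement would fail over a more general ring.
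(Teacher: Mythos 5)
Your proof is correct, but note that the paper itself offers no argument for this statement at all: it is quoted as a known result with a citation to the reference \cite{gustafson1981matrix}, so there is no in-paper proof to compare against. Your forward direction via the generalized Laplace expansion of $\det(N)$ along the first $k$ rows is the standard one and is complete, since it puts the unit $\det(N)$ inside $I_M$. Your converse is also sound: $I_M=R$ gives, after localizing at each maximal ideal, a unit maximal minor and hence surjectivity of $M\colon R^m\to R^k$; the sequence $0\to K\to R^m\to R^k\to 0$ splits because $R^k$ is free; and the Steinitz-class computation via top exterior powers, $R\cong\bigwedge^m R^m\cong\bigwedge^{m-k}K$, correctly forces the rank-one summand $J$ of $K$ to be principal, so $K\cong R^{m-k}$ and the block matrix built from $M$ and the projection onto $K$ lies in $\GL_m(R)$. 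This is exactly where the Dedekind hypothesis is used (finitely generated projectives are $R^{r-1}\oplus J$, and the determinant detects freeness), and you identify that correctly; the remaining steps (Cramer's rule locally, surjectivity being a local property, reading off the inverse of the decomposition map) are routine and correctly handled. In short: a complete, self-contained proof of a statement the paper delegates to the literature.
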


We are now able to state and prove the main theorem

\begin{theorem}\label{unimatrixdensity}
Let $F/\mathbb F_q$ be a function field with full constant field $\mathbb \vF_q$.
Let $\cS$ be a set of places in $\mathbb P_F$ having finite complement.
The density of of the set $U$ of $k\times m$ rectangular unimodular matrices with entries in $\vH_\cS$ is
\[\vD_\cS(U)=\prod^m_{i=m-k+1}\frac{1}{\zeta_\vH(i)},\]
where $\zeta_\vH$ is the zeta function of $\vH$ defined as 
\[\zeta_\vH(s)=\zeta_F(s)\cdot\prod_{R\in \vP_F\setminus \cS}(1-q^{-\deg(R)s})=\prod_{P\in \cS}(1-q^{-\deg(P)s})^{-1}\]
and $\cS$ is the holomorphy set of $\vH$.
\end{theorem}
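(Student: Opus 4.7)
The plan is to apply the local-to-global principle of Theorem \ref{thm:main_density_function_field} to the natural local cover of the non-unimodularity condition. By Theorem \ref{dedekindextension}, $M\in \vH_\cS^{k\times m}$ is rectangular unimodular if and only if the ideal $I_M$ generated by its $k\times k$ minors equals $\vH_\cS$, which in turn holds if and only if for every $P\in\cS$ the reduction $M\bmod P$ has rank $k$. I therefore set
\[U_P := \{M\in \widehat{\cO}_P^{k\times m}\;:\; M\bmod P\text{ has rank }<k\},\]
which is clopen in $\widehat{\cO}_P^{k\times m}$ (so $\mu_P(\partial U_P)=0$ trivially), and the set $U$ of unimodular matrices is precisely $\pi^{-1}(\{\emptyset\})$ in the notation of Theorem \ref{thm:main_density_function_field}.

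The local measures are then a routine finite-field count. Since $U_P$ only depends on the reduction modulo $P$, the classical count of full-rank $k\times m$ matrices over a finite field of size $q^{\deg(P)}$ gives
\[1 - \mu_P(U_P) \;=\; \prod_{i=0}^{k-1}\bigl(1 - q^{\deg(P)(i-m)}\bigr) \;=\; \prod_{j=m-k+1}^{m}\bigl(1 - q^{-j\deg(P)}\bigr).\]

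The crux of the argument, and the step I expect to be the main obstacle, is verifying the vanishing hypothesis \eqref{fund_cond_loc_to_glob}. For this I would invoke Theorem \ref{condition_verified_polynomials_THEOREM}. Let $f$ and $g$ be the determinants of two distinct $k\times k$ minors of the generic matrix $(x_{ij})$, for instance those supported on the column sets $\{1,\dots,k\}$ and $\{2,\dots,k+1\}$ (possible since $m>k$). Each generic $k\times k$ determinant is a well-known irreducible element of $\vH_\cS[x_{ij}]$, and the two chosen minors are manifestly different (the first involves the variables $x_{i,1}$ and the second does not), so as distinct irreducibles they are coprime. Since $M\in U_P$ forces \emph{every} maximal minor of $M$ to vanish mod $P$, in particular $f(M)\equiv g(M)\equiv 0\bmod P$, and Theorem \ref{condition_verified_polynomials_THEOREM} immediately delivers \eqref{fund_cond_loc_to_glob}.

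Having verified the hypotheses of Theorem \ref{thm:main_density_function_field}, part (iii) applied with $T=\emptyset$ yields
\[\vD_\cS(U) \;=\; \nu(\{\emptyset\}) \;=\; \prod_{P\in\cS}\bigl(1-\mu_P(U_P)\bigr) \;=\; \prod_{j=m-k+1}^{m}\,\prod_{P\in\cS}\bigl(1 - q^{-j\deg(P)}\bigr) \;=\; \prod_{j=m-k+1}^{m}\frac{1}{\zeta_\vH(j)},\]
after interchanging the (absolutely convergent) products and recognising the Euler product of $\zeta_\vH$ from the statement of the theorem. This completes the plan.
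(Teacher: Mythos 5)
Your proposal is correct and follows essentially the same route as the paper: the same choice of $U_P$ via Theorem \ref{dedekindextension} (non-full-rank reduction modulo $P$), the same verification of condition \eqref{fund_cond_loc_to_glob} by taking two distinct maximal minors of the generic matrix as coprime polynomials and invoking Theorem \ref{condition_verified_polynomials_THEOREM}, the same finite-field full-rank count for $\mu_P(U_P)$, and the same application of Theorem \ref{thm:main_density_function_field}(iii) with $T=\emptyset$ followed by recognizing the Euler product of $\zeta_\vH$.
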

\begin{proof}
For any $P\in \cS$, let $U_P$ be the set of non-unimodular matrices in $\widehat \cO_P^{k\times m}$, i.e. the set of matrices for which the ideal generated by the determinant of the maximal minors is contained in $P \widehat \cO_P$. By Theorem \ref{dedekindextension} we have that a matrix $M$ is in $U$ if and only if $M\notin U_P$ for any $P$.
Therefore, in the notation of Theorem \ref{thm:main_density_function_field} it follows that $\pi^{-1}(\{\emptyset\})=U$ and then, if the system 
$\{U_P\}$ verifies condition \eqref{fund_cond_loc_to_glob} we have that
\[\vD_\cS(U)=\prod_{P\in \cS} (1-\mu_P(U_P)).\]

In the polynomial ring 
\[
\vH[x_{1,1}, x_{1,2},\dots,x_{1,m}, x_{2,1},\dots x_{i,j}, \dots x_{k,m}].\]
let us consider the $k\times m$ matrix whose $i,j$ entry is the variable $x_{i,j}$. Let $N$, $N'$ be two distinct $k\times k$ minors of this matrix and $f$ and $g$ be the determinants of $N$ and $N'$ respectively. Notice that $f$ and $g$ are distinct irreducible polynomials, therefore coprime.
It is straightforward  observe that the set $\{a\in \vH_{\cS}^d\:|\: a\in U_P \;\text{for some }\; P\in \cS_t\}$ is contained in $\{a\in \vH_{\cS}^d\:|\: f(a)\equiv g(a) \equiv 0 \mod P \;\text{for some }\; P\in \cS_t\}$. By applying Theorem \ref{condition_verified_polynomials} one gets that the condition \ref{fund_cond_loc_to_glob} is verified.

It remains to compute the $P$-measure of the $U_P$, the claim will follow by Proposition \ref{fund_thm_ramified} and the fact that $\pi^{-1}(\{\emptyset\})=U$.
In order to compute such measures, we first decompose $U_P$.
Let $r$ be a representative in $\widehat \cO_P^{k\times m}$ for a matrix in
$(\cO_P/P)^{k\times m}$ which is not full rank. We observe that the set $V_{P}(r)=\{M\in \cO_P^{k\times m}: M=r+(P\widehat \cO_P)^{k\times m}\}$ is contained in $U_P$. 
In addition, if $r,r'\in \cO_P^{k\times m}$ and satisfy $r\not\equiv r' \mod P$, then $V_{P}(r)\cap V_{P}(r')=\emptyset$. Let $R_P\subseteq \widehat \cO_P$ be a set of representatives for the non-full rank matrices of $(\cO_P/P)^{k\times m}$.
Therefore, for fixed $P$ the union of such $V_{P}(r)$ is disjoint and covers the whole $U_P$, from which it follows that
\[\mu_P(U_P)=\mu_P(\bigsqcup_{r\in R_P} V_{P}(r))=\sum_{r\in R_P} \mu_P(V_{P}(r)).\]
Since
\[\mu_P(V_P(r))=\mu_P(V_{P}(r)-r)=\mu_P(P\widehat{\cO}_P)^{mk}=q^{-mk\deg(P)}\]
and the number of non-full rank matrices in $(\cO_P/P)^{k\times m}$ is \[q^{mk\deg(P)}-\prod^{k-1}_{i=0}\left(q^{\deg(P)m}-q^{\deg(P)i}\right),\]
one gets that
\[\mu_P(U_P)=1-q^{-mk\deg(P)}\prod^{k-1}_{i=0}\left(q^{\deg(P)m}-q^{\deg(P)i}\right)=1-\prod^{k-1}_{i=0}\left(1-q^{-\deg(P)(m-i)}\right).\]
Therefore, by applying Theorem \ref{thm:main_density_function_field} we get $\vD_\cS(U)=\prod_{P\in \cS} (1-\mu_P(U_P))$. Exchanging now the finite product with the product over all the places, we get
\[\vD_\cS(U)=\prod^{k-1}_{i=0}\frac{1}{\zeta_\vH(m-i)}\]
which is the result we wanted.
\end{proof}
\begin{remark}
Notice that the density can be explicitly computed by the closed expression of the zeta function \cite[Corollary 5.1.12, (b)]{bib:stichtenoth2009algebraic} and the fact that $\vP_F\setminus \cS$ is finite. Observe that this is consistent with the result obtained for unimodular rows in the general case of global fields \cite{bib:HolMS,ferraguti2016mertens}.
\end{remark}
As an immediate corollary we have
\begin{corollary}
Let $k<m$ be positive integers. The density of the set $U$ of $k\times m$ rectangular unimodular matrices (i.e. left prime matrices) with coefficients in $\vF_q[x]$ is  
\[\vD(U)=\prod^m_{i=m-k+1}\frac{q^{i-1}-1}{q^{i-1}}.\]
\end{corollary}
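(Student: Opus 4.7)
The plan is to apply Theorem~\ref{unimatrixdensity} directly in the specific case $F=\vF_q(x)$, $\vH=\vF_q[x]$, and then simplify the resulting product of zeta values by hand. There should be no essential difficulty here: the work is just bookkeeping about which places are in the holomorphy set and what $\zeta_\vH$ looks like for the rational function field.

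First I would identify the relevant holomorphy data. The ring $\vF_q[x]$ is the intersection of all valuation rings of $\vF_q(x)$ different from the valuation ring at the place at infinity $P_\infty$, which has degree $1$. Hence $\cS=\vP_F\setminus\{P_\infty\}$, and in particular $\vP_F\setminus\cS=\{P_\infty\}$ is finite, so the hypothesis of Theorem~\ref{unimatrixdensity} is satisfied.

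Next I would compute $\zeta_\vH$ explicitly. The zeta function of $F=\vF_q(x)$ is the classical
\[\zeta_F(s)=\frac{1}{(1-q^{-s})(1-q^{1-s})},\]
and plugging into the formula from Theorem~\ref{unimatrixdensity} gives
\[\zeta_\vH(s)=\zeta_F(s)\cdot(1-q^{-\deg(P_\infty)s})=\zeta_F(s)\cdot(1-q^{-s})=\frac{1}{1-q^{1-s}}.\]
Therefore
\[\frac{1}{\zeta_\vH(i)}=1-q^{1-i}=\frac{q^{i-1}-1}{q^{i-1}}.\]

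Finally, I would substitute this into the conclusion of Theorem~\ref{unimatrixdensity}:
\[\vD(U)=\prod_{i=m-k+1}^{m}\frac{1}{\zeta_\vH(i)}=\prod_{i=m-k+1}^{m}\frac{q^{i-1}-1}{q^{i-1}},\]
which is the asserted formula. Since the entire content of the corollary is a specialization of the main theorem combined with a well-known closed form for $\zeta_{\vF_q(x)}$, there is no real obstacle; the only point worth flagging is the Euler-factor correction when passing from $\zeta_F$ to $\zeta_\vH$, which accounts for removing the place at infinity.
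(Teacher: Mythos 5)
Your proposal is correct and follows essentially the same route as the paper: specialize Theorem \ref{unimatrixdensity} to $F=\vF_q(x)$ with $\cS=\vP_F\setminus\{P_\infty\}$, use the classical formula for $\zeta_{\vF_q(x)}$ to get $\zeta_{\vF_q[x]}(s)=1/(1-q^{1-s})$, and substitute into the product. The Euler-factor correction at $P_\infty$ that you flag is exactly the step the paper performs.
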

\begin{proof}
Let $\vF_q(x)$ be the rational function field and $P_\infty$ be the place at infinity of $\vF_q(x)$. Recall that the zeta function of the rational function field is 
\[\zeta_{\vF_q(x)}(s)=\frac{1}{(1-q^{-s})(1-q^{-s+1})}.\]
Previously we already observed that 
\[\vF_q[x]=\bigcap_{P\neq P_\infty} \cO_P\]
 and therefore it follows that $\zeta_{\vF_q[x]}(s)=1/(1-q^{-s+1})$. The claim follows by applying directly Theorem \ref{unimatrixdensity}.

\end{proof}

\begin{remark}
An analogous result in the case of $\mathbb Z$ can be found in \cite{bib:maze2011natural}. It would be of interest to use Theorem \ref{thm:main_density_function_field} to compute the density of matrices having fixed Smith Normal form, at least in the case in which the holomorphy ring is a PID  (a result over $\mathbb Z$ is already available in \cite{wang2015smith}).
\end{remark}

\section*{Acknowledgements}
We would like to thank Bjorn Poonen for very interesting discussions and for sharing his deep insight of the problem. 
We thank Elisa Gorla for suggesting a sophisticated modification of the proof of Theorem \ref{condition_verified_polynomials_THEOREM}, which improved the paper in terms of length and clarity.
We would like to also thank Federico Amadio, Francesca Balestrieri and Andrea Ferraguti for reading the preliminary version of this manuscript.
This work started at MIT in Fall 2015 and was accomplished at the University of Oxford in Michaelmas 2016 under the Swiss National Science Foundation grant number 161757. 

\bibliographystyle{unsrt}
\bibliography{biblio}{}

\begin{thebibliography}{10}

\bibitem{bib:poonenAnn}
Bjorn Poonen and Michael Stoll.
\newblock The {C}assels-{T}ate pairing on polarized abelian varieties.
\newblock {\em Annals of Mathematics}, 150(3):1109--1149, 1999.

\bibitem{torsten1991infinite}
Ekedahl Torsten.
\newblock An infinite version of the chinese remainder theorem.
\newblock {\em Commentarii mathematici Universitatis Sancti Pauli= Rikkyo
  Daigaku sugaku zasshi}, 40(1):53--59, 1991.

\bibitem{bib:kelley1955general}
John~L. Kelley.
\newblock {\em General topology}.
\newblock New York: Van Nostrand, 1955.

\bibitem{bib:BS}
B.~D. Sittinger.
\newblock The probability that random algebraic integers are relatively
  r-prime.
\newblock {\em Journal of Number Theory}, 130(1):164--171, 2010.

\bibitem{suslin1974projective}
Andrei~A Suslin.
\newblock On projective modules over polynomial rings.
\newblock {\em Sbornik: Mathematics}, 22(4):595--602, 1974.

\bibitem{bib:projmod}
Daniel Quillen.
\newblock Projective modules over polynomial rings.
\newblock {\em Inventiones Mathematicae}, 36(1):167--171, 1976.

\bibitem{FORNASINI2004119}
Ettore Fornasini and Raquel Pinto.
\newblock Matrix fraction descriptions in convolutional coding.
\newblock {\em Linear Algebra and its Applications}, 392:119 -- 158, 2004.

\bibitem{rosenthal2001connections}
Joachim Rosenthal.
\newblock Connections between linear systems and convolutional codes.
\newblock In {\em Codes, Systems, and Graphical Models}, pages 39--66.
  Springer, 2001.

\bibitem{bib:guo2013probability}
Xiangqian Guo and Guangyu Yang.
\newblock The probability of rectangular unimodular matrices over
  $\mathbb{F}_q[x]$.
\newblock {\em Linear Algebra and its Applications}, 438(6):2675--2682, 2013.

\bibitem{micheli2016density}
Giacomo Micheli and Reto Schnyder.
\newblock The density of unimodular matrices over integrally closed subrings of
  function fields.
\newblock In {\em Contemporary Developments in Finite Fields and Applications},
  pages 244--253. World Scientific, 2016.

\bibitem{bib:HolMS}
Giacomo Micheli and Reto Schnyder.
\newblock On the density of coprime m-tuples over holomorphy rings.
\newblock {\em International Journal of Number Theory}, 12(03):833--839, 2016.

\bibitem{bib:loctoglob}
Bjorn Poonen and Michael Stoll.
\newblock A local-global principle for densities.
\newblock In Scott~D. Ahlgren, George~E. Andrews, and K.~Ono, editors, {\em
  Topics in Number Theory}, volume 467 of {\em Mathematics and Its
  Applications}, pages 241--244. Springer US, 1999.

\bibitem{bib:stichtenoth2009algebraic}
Henning Stichtenoth.
\newblock {\em Algebraic function fields and codes}, volume 254.
\newblock Springer, 2009.

\bibitem{bib:heyman2014shifted}
Randell Heyman and Igor~E. Shparlinski.
\newblock On shifted {E}isenstein polynomials.
\newblock {\em Periodica Mathematica Hungarica}, 69(2):170--181, 2014.

\bibitem{guo2013probability}
Xiangqian Guo and Guangyu Yang.
\newblock The probability of rectangular unimodular matrices over ${F}_q[x]$.
\newblock {\em Linear Algebra and its Applications}, 438(6):2675--2682, 2013.

\bibitem{gustafson1981matrix}
William~H Gustafson, Marion~E Moore, and Irving Reiner.
\newblock Matrix completions over dedekind rings.
\newblock {\em Linear and Multilinear Algebra}, 10(2):141--144, 1981.

\bibitem{ferraguti2016mertens}
Andrea Ferraguti and Giacomo Micheli.
\newblock On the mertens--cesaro theorem for number fields.
\newblock {\em Bulletin of the Australian Mathematical Society},
  93(02):199--210, 2016.

\bibitem{bib:maze2011natural}
G{\'e}rard Maze, Joachim Rosenthal, and Urs Wagner.
\newblock Natural density of rectangular unimodular integer matrices.
\newblock {\em Linear Algebra and its Applications}, 434(5):1319--1324, 2011.

\bibitem{wang2015smith}
Yinghui Wang and Richard~P Stanley.
\newblock The smith normal form distribution of a random integer matrix.
\newblock {\em arXiv preprint arXiv:1506.00160}, 2015.

\end{thebibliography}

\end{document}